\newtheorem{theorem}{Theorem}[section]
\newtheorem{corollary}{Corollary}
\newtheorem{lemma}[theorem]{Lemma}
\newtheorem{proposition}[theorem]{Proposition}
\theoremstyle{definition}
\newtheorem{remark}{Remark}
\def\R{\mathbb{R}}
\title[A TB-HIV/AIDS coinfection model]{A TB-HIV/AIDS coinfection model and optimal control treatment}
\author[C. J. Silva and D. F. M. Torres]{}
\subjclass{Primary: 92D30, 93A30; Secondary: 34D30, 49J15.}
\keywords{Tuberculosis, human immunodeficiency virus,
coinfection, treatment, equilibrium, stability, optimal control.}
\email{cjoaosilva@ua.pt}
\email{delfim@ua.pt}
\begin{document}

\maketitle


\centerline{\scshape Cristiana J. Silva and Delfim F. M. Torres}
\medskip
{\footnotesize
\centerline{Center for Research and Development in Mathematics and Applications (CIDMA)}
\centerline{Department of Mathematics, University of Aveiro, 3810--193 Aveiro, Portugal}
}


\begin{abstract}
We propose a population model for TB-HIV/AIDS coinfection transmission dynamics,
which considers antiretroviral therapy for HIV infection and treatments for latent
and active tuberculosis. The HIV-only and TB-only sub-models are analyzed separately,
as well as the TB-HIV/AIDS full model. The respective basic reproduction numbers are computed,
equilibria and stability are studied. Optimal control theory is applied to the TB-HIV/AIDS model
and optimal treatment strategies for co-infected individuals with HIV and TB are derived.
Numerical simulations to the optimal control problem show that non intuitive measures
can lead to the reduction of the number of individuals with active TB and AIDS.
\end{abstract}


\section{Introduction}

According with the World Health Organization (WHO),
the human immunodeficiency virus (HIV) and mycobacterium tuberculosis are the first
and second cause of death from a single infectious agent, respectively \cite{TB_WHO_report_2013}.
Acquired immunodeficiency syndrome (AIDS) is a disease of the human immune system caused
by infection with HIV. HIV is transmitted primarily via unprotected sexual intercourse,
contaminated blood transfusions, hypodermic needles, and from mother to child during pregnancy,
delivery, or breastfeeding \cite{Book:Env:Medicine}. There is no cure or vaccine to AIDS.
However, antiretroviral (ART) treatment improves health, prolongs life, and substantially
reduces the risk of HIV transmission. In both high-income and low-income countries,
the life expectancy of patients infected with HIV who have access to ART is now measured
in decades, and might approach that of uninfected populations in patients who receive
an optimum treatment (see \cite{AIDS:chronic:Lancet:2013} and references cited therein).
However, ART treatment still presents substantial limitations: does not fully restore health;
treatment is associated with side effects; the medications are expensive; and is not curative.
Following UNAIDS global report on AIDS epidemic 2013 \cite{UNAIDS_report_2013}, globally,
an estimated 35.3 million people were living with HIV in 2012. An increase from previous years,
as more people are receiving ART. There were approximately 2.3 million new HIV infections globally,
showing a 33\% decline in the number of new infections with respect to 2001. At the same time,
the number of AIDS deaths is also declining with around 1.6 million AIDS deaths in 2012,
down from about 2.3 million in 2005.

Mycobacterium tuberculosis is the cause of most occurrences of tuberculosis (TB)
and is usually acquired via airborne infection from someone who has active TB.
It typically affects the lungs (pulmonary TB) but can affect other sites as well
(extrapulmonary TB). In 2012, approximately 8.6 million people fell ill with TB
and 1.3 million people died from TB. Nevertheless, TB death rate dropped 45 per cent
between 1990 and 2012, and 22 million lives were saved through use of strategies
recommended by WHO \cite{TB_WHO_report_2013}.

Individuals infected with HIV are more likely to develop TB disease because of their
immunodeficiency, and HIV infection is the most powerful risk factor for progression
from TB infection to disease \cite{Getahun:etall:CID2010}. In 2012, 1.1 million
of 8.6 million people who developed TB worldwide were HIV-positive. The number
of people dying from HIV-associated to TB has been falling since 2003. However,
there were still 320 000 deaths from HIV-associated to TB in 2012, and further efforts
are needed to reduce this burden \cite{TB_WHO_report_2013}. ART is a critical intervention
for reducing the risk of TB morbidity and mortality among people living with HIV and,
when combined with TB preventive therapy, it can have a significant impact
on TB prevention \cite{TB_WHO_report_2013}.

Collaborative TB/HIV activities (including HIV testing, ART therapy and TB preventive measures)
are crucial for the reduction of TB-HIV coinfected individuals. WHO estimates that these collaborative
activities prevented 1.3 million people from dying, from 2005 to 2012. However, significant
challenges remain: the reduction of tuberculosis related deaths among people living with HIV
has slowed in recent years; the ART therapy is not being delivered to TB-HIV coinfected patients
in the majority of the countries with the largest number of TB/HIV patients; the pace of treatment
scale-up for TB/HIV patients has slowed; less than half of notified TB patients were tested
for HIV in 2012; and only a small fraction of TB/HIV infected individuals received
TB preventive therapy \cite{UNAIDS_report_2013}.

The study of the joint dynamics of TB and HIV present formidable mathematical challenges
due to the fact that the models of transmission are quite distinct \cite{CChavez_TB_HIV_2009}.
Some mathematical models have been proposed for TB-HIV coinfection (see, e.g.,
\cite{Mod:TB:HIV:Bacaer:JMB:2008,Bhunu:BMB:2009:HIV:TB,Kirschner:TB:HIV:1999,%
Mod:TB:HIV:Magombedze:MathPop:2010,Naresh:TB:HIV:2005,CChavez_TB_HIV_2009,Song:TB:HIV:2008}).
In this paper, we propose a new population model for TB-HIV/AIDS coinfection transmission dynamics,
where TB, HIV and TB-HIV infected individuals have access to respective disease treatment,
and single HIV-infected and TB-HIV co-infected individuals under HIV and TB/HIV treatment,
respectively, stay in a \emph{chronic} stage of the HIV infection.

Optimal control is a branch of mathematics developed
to find optimal ways to control a dynamic system
\cite{Cesari_1983,Fleming_Rishel_1975,Pontryagin_et_all_1962}.
While the usefulness of optimal control theory in epidemiology is nowadays well recognized (see, e.g.,
\cite{MariaRPinho:SEIR:MBE:2014,UrszulaLed:OC:SIR:AIMS:2011,livro_Lenhart_2007,%
Rodrigues_Monteiro_Torres_2010,Rodrigues_Monteiro_Torres_Zinober_2011}),
and has been applied to TB models (see, e.g.,
\cite{Bowong_2010,Emvudu_et_all,Haffat_et_all,SLenhart_2002,Silva:Torres:TBOC:NACO:2012,Silva:Torres:TBOC:MBS:2013})
and HIV models (see, e.g., \cite{Kirschner:Lenhart:OC:HIV:JMB:1996,Magombedze:etall:OC:HIV:2009}),
to our knowledge optimal control have never been applied to a TB-HIV/AIDS coinfection model.
In this paper, we apply optimal control theory to our TB-HIV/AIDS model and study optimal strategies
for the minimization of the number of individuals with TB and AIDS active diseases, taking into account
the costs associated to the proposed control measures.

The paper is organized as follows. The model is formulated in Section~\ref{sec:model}.
The HIV-only and TB-only sub-models of the full TB-HIV/AIDS model are analyzed
in Section~\ref{sec:submodels} and the full TB-HIV/AIDS model is analyzed
in Section~\ref{sec:fullmodel}. In Section~\ref{sec:optimal:control}
we propose an optimal control problem and apply the Pontryagin maximum principle
to derive its solution. In Section~\ref{sec:num:results} numerical
simulations and discussion of the results are carried out for the optimal control
problem associated to the TB-HIV/AIDS model. We end mentioning some
possible future work in Section~\ref{sec:FC:fw}.


\section{Model formulation and basic properties}
\label{sec:model}

The model subdivides the human population into eleven mutually-exclusive compartments,
namely susceptible individuals ($S$), TB-latently infected individuals, who have
no symptoms of TB disease and are not infectious ($L_T$), TB-infected individuals,
who have active TB disease and are infectious ($I_T$), TB-recovered individuals
($R$), HIV-infected individuals with no clinical symptoms of AIDS ($I_H$),
HIV-infected individuals under treatment for HIV infection ($C_H$),
HIV-infected individuals with AIDS clinical symptoms ($A$), TB-latent individuals
co-infected with HIV (pre-AIDS) ($L_{TH}$), HIV-infected individuals (pre-AIDS)
co-infected with active TB disease ($I_{TH}$), TB-recovered individuals with
HIV-infection without AIDS symptoms ($R_{H}$), HIV-infected individuals
with AIDS symptoms co-infected with active TB ($A_T$).
The total population at time $t$, denoted by $N(t)$, is given by
\begin{multline*}
N(t) = S(t) + L_T(t) + I_T(t) + R(t) + I_H(t) + C_H(t) + A(t)\\
+ I_{TH}(t) + L_{TH}(t) + R_{H}(t) + A_T(t).
\end{multline*}
The susceptible population is increased by the recruitment of individuals
(assumed susceptible) into the population, at a rate $\Lambda$.
All individuals suffer from natural death, at a constant rate $\mu$.
Susceptible individuals acquire TB infection from individuals
with active TB at a rate $\lambda_T$, given by
\begin{equation}
\label{eq:lambdaT}
\lambda_T(t) = \frac{\beta_1}{N(t)} \left(I_T(t) + I_{TH}(t) + A_T(t)\right),
\end{equation}
where $\beta_1$ is the effective contact rate for TB infection.
Similarly, susceptible individuals acquire HIV infection, following
effective contact with people infected with HIV at a rate $\lambda_H$, given by
\begin{equation}
\label{eq:lambdaH}
\lambda_H(t) = \frac{\beta_2}{N(t)} \left[ I_H(t) + I_{TH}(t) + L_{TH}(t)
+ R_{H}(t) + \eta_C \, C_H(t)  + \eta_A \left(A(t) + A_T(t)\right) \right],
\end{equation}
where $\beta_2$ is the effective contact rate for HIV transmission.
The modification parameter $\eta_A \geq 1$ accounts for the relative
infectiousness of individuals with AIDS symptoms, in comparison to those
infected with HIV with no AIDS symptoms. Individuals with AIDS symptoms
are more infectious than HIV-infected individuals (pre-AIDS) because
they have a higher viral load and there is a positive correlation
between viral load and infectiousness \cite{art:viral:load}.
On the other hand, $\eta_C \leq 1$ translates the partial restoration of immune function
of individuals with HIV infection that use correctly ART \cite{AIDS:chronic:Lancet:2013}.

\begin{remark}
For the basic and classical SIR model, one has the force of infection
$F$ given by $F = \beta I$, which models the transition rate from
the compartment of susceptible individuals $S$
to the compartment of infectious individuals $I$.
However, for large classes of communicable diseases,
it is more realistic to consider a force of infection $F$
that does not depend on the absolute number of infectious,
but on their fraction with respect to the total population $N$,
that is, $F = \beta \frac{I}{N}$. In our case, the force of infection
for the HIV is given by
$\lambda_H = \frac{\beta_2}{N} \left[ I_H + I_{TH} + L_{TH}
+ R_{H} + \eta_C \, C_H  + \eta_A \left(A + A_T\right) \right]$.
\end{remark}

Only approximately 10\% of people infected
with mycobacterium tuberculosis develop active TB disease. Therefore,
approximately 90\% of people infected remain latent. Latent infected TB people
are asymptomatic and do not transmit TB \cite{Styblo_1978}.
Individuals leave the latent-TB class $L_T$ by becoming infectious,
at a rate $k_1$, or recovered, with a treatment rate $\tau_1$.
The treatment rate for active TB-infected individuals is $\tau_2$.
We assume that TB-recovered individuals $R$ acquire partial immunity
and the transmission rate for this class is given by $\beta'_1 \lambda_T$
with $\beta'_1 \leq 1$. Individuals with active TB disease suffer
induced death at a rate $d_T$. We assume that individuals in the class
$R$ are susceptible to HIV infection at a rate $\lambda_H$.
On the other hand, TB-active infected individuals $I_T$ are susceptible
to HIV infection, at a rate $\delta \lambda_H$, where the modification
parameter $\delta \geq 1$ accounts for higher probability of individuals
in class $I_T$ to become HIV-positive.
HIV-infected individuals (with no AIDS symptoms) progress to the AIDS class $A$
at a rate $\rho_1$, and to the class of individuals with HIV infection under
treatment $C_H$ at a rate $\phi$. Individuals in the class $C_H$ leave
to the class $I_H$ at a rate $\omega_1$. HIV-infected individuals
with AIDS symptoms are treated for HIV at the rate $\alpha_1$
and suffer induced death at a rate $d_A$. Individuals in the class
$I_H$ are susceptible to TB infection at a rate $\psi \lambda_T$,
where $\psi \geq 1$ is a modification parameter
traducing the fact that HIV infection is a driver of TB epidemic
\cite{Kwan_Ernst_HIV_TB_Syndemic}.
HIV-infected individuals (pre-AIDS) co-infected with TB-disease,
in the active stage $I_{TH}$, leave this class at a rate $\rho_2$.
A fraction $p$ of $I_{TH}$ individuals take simultaneously TB
and HIV treatment and a fraction $q$ of $I_{TH}$ individuals take only TB treatment.
Individuals in the class $I_{TH}$ progress to the class $C_H$ at a rate $p \rho_2$
and to the class $R_H$ at a rate $q p_2$. Individuals in the class $I_{TH}$
that do not take any  of the TB or HIV treatments progress to the class $A_T$
at a rate $(1-(p+q)) \rho_2$, and suffer TB induced death rate at a rate $d_T$.
Individuals leave $L_{TH}$ class at a rate $\tau_3$. A fraction $r$ of $L_{TH}$
individuals take simultaneously TB and HIV treatment
and a fraction $1-r$ take only TB treatment.
Individuals in the class $L_{TH}$ progress to the class $C_H$ at a rate
$r \tau_3$ and to the class $R_H$ at a rate $(1-r) \tau_3$.
Individuals in the class $L_{TH}$ are more likely to progress
to active TB disease than individuals infected only with latent TB.
In our model, this progression rate is given by $k_2$.
Similarly, HIV infection makes individuals more susceptible to TB reinfection
when compared with non HIV-positive patients. The modification parameter associated
to the TB reinfection rate, for individuals in the class $R_{H}$, is given by $\beta'_2$,
where $\beta'_2 \geq 1$. Individuals in this class progress to class $A$, at a rate $\omega_2$.
HIV-infected individuals (with AIDS symptoms), co-infected with TB, are treated for HIV,
at a rate $\alpha_2$. Individuals in the class $A_T$ suffer from AIDS-TB
coinfection induced death rate, at a rate $d_{TA}$.
The aforementioned assumptions result in the system of differential
equations
\begin{equation}
\label{model:TB:HIV:Chronic:semcont}
\begin{cases}
\dot{S}(t) = \Lambda - \lambda_T(t) S(t) - \lambda_H(t) S(t) - \mu S(t),\\
\dot{L}_T(t) = \lambda_T(t) S(t) + \beta^{'}_1 \lambda_T(t) R(t) - (k_1 + \tau_1 + \mu)L_T(t),\\
\dot{I}_T(t) = k_1 L_T(t) - \left(\tau_2 +d_T +\mu + \delta \lambda_H(t)\right)I_T(t),\\
\dot{R}(t) = \tau_1 L_T(t) + \tau_2 I_T(t) - (\beta^{'}_1 \lambda_T(t) + \lambda_H(t) + \mu) R(t),\\
\dot{I}_H(t) = \lambda_H(t) S(t) - (\rho_1 + \phi + \psi \lambda_T(t) + \mu)I_H(t)
+ \alpha_1 A(t) + \lambda_H(t) R(t) + \omega_1 C_H(t), \\
\dot{A}(t) =  \rho_1 I_H(t) + \omega_2 R_H(t) - \alpha_1 A(t) - (\mu + d_A) A(t),\\
\dot{C}_H(t) = \phi I_H(t) + p \, \rho_2 I_{TH}(t) + r\,  \tau_3 L_{TH}(t) - (\omega_1 + \mu)C_H(t),\\
\dot{L}_{TH}(t) = \beta^{'}_2 \lambda_T(t) R_{H}(t) - (k_2 + \tau_3 + \mu) L_{TH}(t),\\
\dot{I}_{TH}(t) = \delta \lambda_H(t) I_T(t) + \psi \lambda_T(t) I_H(t)
+ \alpha_2 A_T(t)+ k_2 L_{TH}(t) - \left( \rho_2 + \mu + d_T \right)I_{TH}(t),\\
\dot{R}_{H}(t) = q \rho_2 I_{TH}(t) + (1-r)\, \tau_3 L_{TH}(t)
- \left(\beta^{'}_2 \lambda_T(t) + \omega_2 + \mu\right)R_{H}(t),\\
\dot{A}_T(t) = (1-(p+q))\rho_2 I_{TH}(t) -(\alpha_2 + \mu + d_{TA})A_T(t),
\end{cases}
\end{equation}
that describes the transmission dynamics of TB and HIV/AIDS disease.
The model flow is illustrated in Figure~\ref{fig:model:flow}.
\begin{figure}
\centering
\includegraphics[scale=0.85]{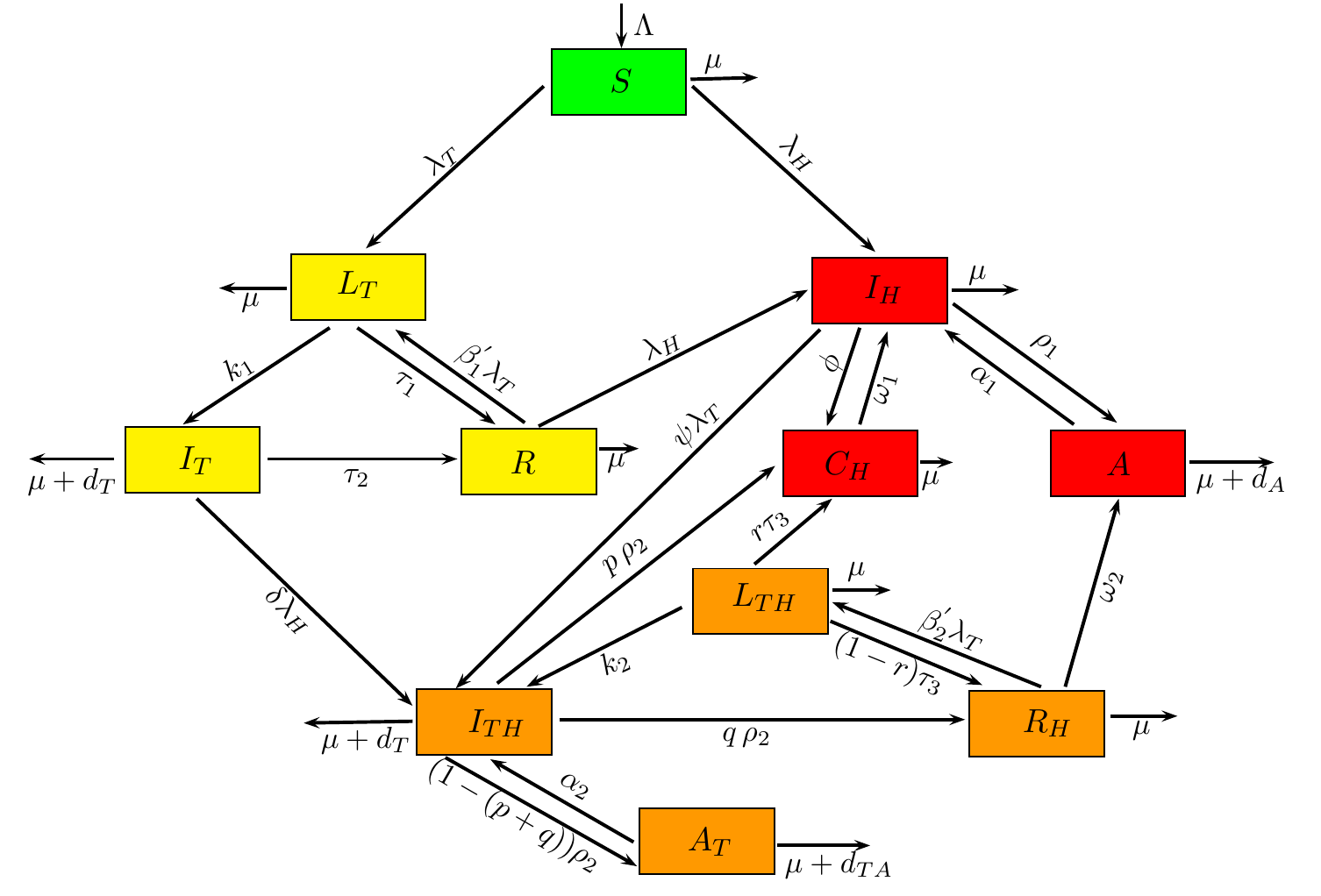}
\caption{Model for TB-HIV/AIDS transmission.}
\label{fig:model:flow}
\end{figure}


\subsection{Positivity and boundedness of solutions}

Since the system of equations \eqref{model:TB:HIV:Chronic:semcont} represents human populations,
all parameters in the model are non-negative and it can be shown that, given non-negative
initial values, the solutions of the system are non-negative. Consider the biologically feasible region
\begin{equation*}
\Omega = \{ \left( S, L_T, I_T, R, I_H, A, C_H, L_{TH}, I_{TH}, R_H, A_T \right)
\in \R_+^{11} \, : \, N \leq \Lambda/\mu \}.
\end{equation*}
In what follows we prove the positive invariance of $\Omega$
(i.e., all solutions in $\Omega$ remain in $\Omega$ for all time).
The rate of change of the total population, obtained by adding all the equations
in model \eqref{model:TB:HIV:Chronic:semcont}, is given by
\begin{equation*}
\frac{dN}{dt} = \Lambda - \mu N(t) - d_T I_T(t) - d_A A(t) - d_T I_{TH}(t) - d_{TA} A_T(t).
\end{equation*}
Using a standard comparison theorem \cite{Lakshmikantham:1989} we can show that
$$
N(t) \leq N(0) e^{-\mu t} + \frac{\Lambda}{\mu} \left( 1- e^{-\mu t} \right).
$$
In particular, $N(t) \leq \frac{\Lambda}{\mu}$ if $N(0) \leq \frac{\Lambda}{\mu}$.
Thus, the region $\Omega$ is positively invariant. Hence, it is sufficient to consider
the dynamics of the flow generated by \eqref{model:TB:HIV:Chronic:semcont} in $\Omega$.
In this region, the model is epidemiologically and mathematically well posed \cite{Hethcote:2000}.
Thus, every solution of the model \eqref{model:TB:HIV:Chronic:semcont} with initial conditions
in $\Omega$ remains in $\Omega$ for all $t > 0$.
This result is summarized below.

\begin{lemma}
\label{lemma:posit:invariant:model:TB:HIV}
The region $\Omega$ is positively invariant for the model
\eqref{model:TB:HIV:Chronic:semcont} with non-negative initial conditions
in $\R^{11}_{+}$.
\end{lemma}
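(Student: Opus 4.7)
The plan is to establish two things, which together give positive invariance of $\Omega$: first, that non-negativity of every compartment is preserved on the boundary of $\mathbb{R}^{11}_+$; second, that the total population $N(t)$ stays below the threshold $\Lambda/\mu$ whenever it starts there. The authors have already sketched the second half, so the main work is making the first half rigorous and then tying the two pieces together.

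For the non-negativity part, I would invoke a standard tangent-cone (Nagumo-type) argument: a closed set defined by inequalities $x_i \geq 0$ is positively invariant for an ODE provided that, on each face $\{x_i = 0\}$, the $i$-th component of the vector field is non-negative. I would therefore evaluate each equation of \eqref{model:TB:HIV:Chronic:semcont} when its associated variable vanishes and check that the right-hand side is non-negative on the rest of $\mathbb{R}^{11}_+$. For example, $\dot{S}|_{S=0} = \Lambda \geq 0$; $\dot{L}_T|_{L_T=0} = \lambda_T(S + \beta'_1 R) \geq 0$; $\dot{I}_T|_{I_T=0} = k_1 L_T \geq 0$; $\dot{R}|_{R=0} = \tau_1 L_T + \tau_2 I_T \geq 0$; and similarly for the remaining compartments, where in each case the outflow terms carry a factor of the vanishing variable and the inflow terms are sums of non-negative quantities. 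Since $\lambda_T$ and $\lambda_H$ in \eqref{eq:lambdaT}--\eqref{eq:lambdaH} are non-negative linear combinations of non-negative compartments divided by $N > 0$, this check is routine for every one of the eleven equations.

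For the boundedness part, I would follow the argument already outlined: adding all eleven equations cancels every transition term pairwise, leaving the balance
\begin{equation*}
\dot{N}(t) = \Lambda - \mu N(t) - d_T I_T(t) - d_A A(t) - d_T I_{TH}(t) - d_{TA} A_T(t) \leq \Lambda - \mu N(t).
\end{equation*}
A standard comparison theorem (as in \cite{Lakshmikantham:1989}) applied to this differential inequality yields
\begin{equation*}
N(t) \leq N(0) e^{-\mu t} + \frac{\Lambda}{\mu}\bigl(1 - e^{-\mu t}\bigr),
\end{equation*}
which is a convex combination of $N(0)$ and $\Lambda/\mu$. In particular, $N(0) \leq \Lambda/\mu$ implies $N(t) \leq \Lambda/\mu$ for all $t \geq 0$.

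The only point that requires care is making sure the non-negativity step is genuinely complete before invoking the bound on $N$: the inflow terms in the $L_{TH}$, $I_{TH}$, $R_H$, $A_T$, $C_H$, $I_H$, and $A$ equations involve many cross-terms, and I would verify each face separately rather than appealing to a general positivity lemma. Once non-negativity is in hand, the bound $N(t) \leq \Lambda/\mu$ automatically gives $0 \leq x_i(t) \leq N(t) \leq \Lambda/\mu$ for every compartment $x_i$, so the trajectory stays in $\Omega$. Combining the two steps establishes positive invariance, which is what the lemma asserts.
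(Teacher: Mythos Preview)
Your proposal is correct and matches the paper's approach: the paper establishes the bound $N(t)\leq\Lambda/\mu$ by summing the equations and invoking the same comparison theorem, exactly as you do in the second step. The only difference is that the paper merely asserts non-negativity (``it can be shown that, given non-negative initial values, the solutions of the system are non-negative'') without supplying the face-by-face Nagumo check that you carry out; your treatment simply makes that step explicit, and no alternative idea is involved.
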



\section{Analysis of the sub-models}
\label{sec:submodels}

In this section we analyze the models for HIV only (HIV-\emph{only model})
and TB only (TB-\emph{only model}).


\subsection{HIV-only model}

The model that considers only HIV (obtained by setting
$L_T = I_T = R = L_{TH}= I_{TH}= R_H= A_T$) is given by
\begin{equation}
\label{model:HIV:only}
\begin{cases}
\dot{S}(t) = \Lambda - \lambda_H(t) S(t) - \mu S(t),\\[0.2 cm]
\dot{I}_H(t) = \lambda_H(t) S(t) - (\rho_1 + \phi + \mu)I_H(t)
 + \alpha_1 A(t)  + \omega_1 C_H, \\[0.2 cm]
\dot{A}(t) =  \rho_1 I_H(t) - (\alpha_1 + \mu + d_A) A(t),\\[0.2 cm]
\dot{C}_H(t) = \phi I_H(t) - (\omega_1 + \mu)C_H(t),
\end{cases}
\end{equation}
where
\begin{equation*}
\lambda_H(t) = \frac{\beta_2}{N(t)} \left[ I_H(t) + \eta_C C_H(t) + \eta_A A(t) \right]
\end{equation*}
with
\begin{equation*}
N(t) = S(t) + I_H(t) + A(t) + C_H(t).
\end{equation*}
Analogously to Lemma~\ref{lemma:posit:invariant:model:TB:HIV}, we can prove that the region
\begin{equation}
\label{Omega1:inv:region:HIV}
\Omega_1 = \{ \left( S, I_H, A, C_H \right) \in \R_+^{4} \, : \, N \leq \Lambda/\mu \}
\end{equation}
is positively invariant and attracting. Thus, the dynamics
of the HIV-only model will be considered in $\Omega_1$.


\subsubsection{Persistence}

In this section, we look for the conditions under which the host population
and disease will persist. Rewriting the submodel system \eqref{model:HIV:only} as
\begin{equation}
\label{mod:HIV:beta2N}
\begin{cases}
\dot{S}(t) = \Lambda - \frac{\beta_2(N) \left( I_H + \eta_C \, C_H
+ \eta_A A  \right)}{N} S(t) - \mu S(t),\\[0.2 cm]
\dot{I}_H(t) = \frac{\beta_2(N) \left( I_H + \eta_C \, C_H
+ \eta_A A  \right)}{N} S(t) - (\rho_1 + \phi + \mu)I_H(t)
 + \alpha_1 A(t)  + \omega_1 C_H, \\[0.2 cm]
\dot{C}_H(t) = \phi I_H(t) - (\omega_1 + \mu)C_H(t),\\[0.2 cm]
\dot{A}(t) =  \rho_1 I_H(t) - (\alpha_1 + \mu + d_A) A(t),
\end{cases}
\end{equation}
in what follows we assume that $\beta_2 (N)$ is continuous for $N \geq 0$
and continuously differentiable for $N > 0$; $\beta_2(N)$
is monotone nondecreasing in $N$; and $\beta_2(N) > 0$ if $N > 0$.

\begin{remark}
In this work $\beta_2$ denotes the effective contact rate for HIV transmission.
It is a constant for a concrete situation, but one can look to it
as variable in the sense that, depending on the situation/region,
one can have different values for this parameter. This is so because
$\beta_2$ is related with the level of contagion/propagation of the disease.
In Section~\ref{sec:optimal:control} we consider fixed values for $\beta_1$ and $\beta_2$,
which represent specific cases of the infection level.
By varying $\beta_1$ and $\beta_2$ we vary
the basic reproduction numbers (see expressions
\eqref{eq:R0:HIV:only} and \eqref{eq:brn:R2}
for $R_1$ and $R_2$, respectively).
Here we consider $\beta_2$ as a function of $N$
to discuss persistence.
\end{remark}

It is convenient to reformulate the model in terms of the fractions
of the $S$, $I_H$, $A$ and $C_H$ parts of the population,
\begin{equation}
\label{eq:xyzw}
x = \frac{S}{N}, \quad y = \frac{I_H}{N},
\quad z = \frac{C_H}{N}, \quad w = \frac{A}{N},
\end{equation}
and express \eqref{mod:HIV:beta2N} in these terms to obtain the system
\begin{equation}
\label{system:xyzw}
\begin{cases}
\dot{N} =  \Lambda - (\mu + d_A w) N ,\\[0.2 cm]
\dot{x} = \frac{\Lambda}{N}(1-x) - \beta_2(N) (y + \eta_C z
+ \eta_A w)x + x(\mu - y + d_A w) ,\\[0.2 cm]
\dot{y} =  \beta_2(N)(y+ \eta_C Z + \eta_A w)x + y d_A w
- \left(\rho_1 + \phi + \mu\right)y + \alpha_1 w + \omega_1 z,\\[0.2 cm]
\dot{z} = \phi y - (\omega_1 + \mu - d_A w)z,\\[0.2 cm]
\dot{w} = \rho_1 y - (\alpha_1 + \mu + d_A - d_A w) w.
\end{cases}
\end{equation}
Equations \eqref{eq:xyzw} suggest that $x + y + z + w = 1$.
The manifold $x + y + z + w= 1$, $x, y, z, w \ge 0$, is forward invariant
under the solution flow of \eqref{system:xyzw}, which has a global solution satisfying
\eqref{eq:xyzw}. We now show conditions under which the host population will persist.

\begin{theorem}
\label{theo:persistence}
Let $\beta_2(0) = 0$, $N(0) > 0$. Then the population
is uniformly persistent, that is,
$$
\liminf_{t \to \infty} N(t) \geq \varepsilon,
$$
where $\varepsilon > 0$ does not depend on the initial data.
\end{theorem}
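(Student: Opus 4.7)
The plan is to extract a scalar differential inequality for $N(t)$ directly from the reformulated system \eqref{system:xyzw} and then invoke the comparison principle, in the same spirit as the boundedness proof for Lemma~\ref{lemma:posit:invariant:model:TB:HIV}.

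First I would use the positive invariance of $\Omega_1$ (the analogue of Lemma~\ref{lemma:posit:invariant:model:TB:HIV} for the HIV-only sub-model, already stated in the text around \eqref{Omega1:inv:region:HIV}) to ensure that for any initial condition with $N(0)>0$ in $\Omega_1$, one has $S, I_H, A, C_H \geq 0$ for all $t \geq 0$. In particular $A(t) \leq N(t)$, so the fraction $w(t)=A(t)/N(t)$ lies in $[0,1]$ throughout the forward orbit.

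Substituting the bound $w \leq 1$ into the first equation of \eqref{system:xyzw} yields the pointwise differential inequality
$$\dot{N}(t) \;\geq\; \Lambda - (\mu + d_A)\, N(t), \qquad t \geq 0.$$
The standard comparison theorem \cite{Lakshmikantham:1989}, already invoked in the proof of Lemma~\ref{lemma:posit:invariant:model:TB:HIV}, then gives
$$N(t) \;\geq\; N(0)\, e^{-(\mu + d_A)\, t} + \frac{\Lambda}{\mu + d_A}\bigl(1 - e^{-(\mu + d_A)\, t}\bigr),$$
so that $\liminf_{t\to\infty} N(t) \geq \Lambda/(\mu+d_A)$. Any choice $0 < \varepsilon \leq \Lambda/(\mu+d_A)$ (for instance $\varepsilon = \Lambda/(2(\mu+d_A))$) therefore works, and this $\varepsilon$ depends only on the parameters $\Lambda$, $\mu$, $d_A$, not on the initial condition.

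The only delicate point is the a priori bound $A \leq N$, which is not a self-contained fact about $N$ alone but follows at once from the non-negativity of the remaining compartments, i.e. from the positive invariance of $\Omega_1$. The hypothesis $\beta_2(0)=0$ is not actually invoked along this route; it encodes the biological convention that transmission vanish in the absence of a host population, and would be relevant for a sharper persistence statement phrased at the level of the infected fractions $(y,z,w)$ rather than the total size $N$.
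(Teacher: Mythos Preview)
Your argument is correct and takes a genuinely different, far more elementary route than the paper. The paper recasts the question in the Thieme persistence framework: it introduces the boundary set $X_2=\{N=0\}$ and the interior $X_1=\{N>0\}$ on the simplex $x+y+z+w=1$, quotes abstract results (Theorems~\ref{theo:auxiliar1}, \ref{theo:auxiliar2}, Corollary~\ref{cor:auxiliar3} from \cite{Thieme:1993}) reducing uniform strong repulsion to uniform weak repulsion, and then manufactures a lower bound on $\beta_2(N^\infty)$ via estimates on $r=y+z+w$; continuity of $\beta_2$ at $0$ together with $\beta_2(0)=0$ then forces $N^\infty\ge\varepsilon$. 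By contrast, you read the scalar inequality $\dot N\ge\Lambda-(\mu+d_A)N$ straight off the first line of \eqref{system:xyzw} using only $0\le w\le1$, and a one-line comparison with a linear ODE gives the explicit bound $\liminf_{t\to\infty}N(t)\ge \Lambda/(\mu+d_A)$.

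What each approach buys: your argument yields an explicit, parameter-only $\varepsilon$ and, as you note, does not need the hypothesis $\beta_2(0)=0$ at all; it also makes transparent that persistence of the host population here is really a trivial consequence of the constant recruitment $\Lambda>0$ and the uniform bound on per-capita removal. The paper's route is heavier but is set up so that the same Thieme machinery can be reused for the genuinely nontrivial disease-persistence statement (Proposition~\ref{prop:persistent}); the hypothesis $\beta_2(0)=0$ belongs to that framework rather than to the bare population bound. One minor caveat: the positive invariance you invoke around \eqref{Omega1:inv:region:HIV} was recorded for constant $\beta_2$, so strictly speaking you should remark that non-negativity of the compartments (hence $w\in[0,1]$) carries over verbatim to the $N$-dependent contact rate $\beta_2(N)$ under the standing regularity assumptions on $\beta_2$; this is routine.
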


\begin{proof}
We have to show that the set
\begin{equation*}
X_2 = \left\{ N = 0, \, x \geq 0 , \, y \geq 0 , \, z \geq 0, \,
w \geq 0 , \, x + y + z + w = 1 \right\}
\end{equation*}
is uniform strong repeller for
\begin{equation*}
X_1 = \left\{ N > 0, \, x \geq 0 , \, y \geq 0 , \, z \geq 0,
\, w \geq 0 , \, x + y + z + w = 1  \right\}.
\end{equation*}
Theorem~\ref{theo:auxiliar1}, Theorem~\ref{theo:auxiliar2} and Corollary~\ref{cor:auxiliar3}
are taken from \cite{Bhunu:BMB:2009:HIV:TB,Thieme:1993}.
\begin{theorem}
\label{theo:auxiliar1}
Let $X$ be a locally compact metric space with metric $d$. Let $X$ be the disjoint
union of two sets $X_1$ and $X_2$ such that $X_2$ is compact. Let $\Phi$ be a continuous
semiflow on $X_1$. Then $X_2$ is a uniform strong repeller for $X_1$,
whenever it is a uniform weak repeller for $X_1$.
\end{theorem}
\begin{theorem}
\label{theo:auxiliar2}
Let $D$ be a bounded interval in $\R$ and $g \, : \, (t_0, \infty) \times D \to \R$
be bounded and uniformly continuous. Further, let $x \, : \, (t_0, \infty) \to D$ be a solution of
$$
x' = g(t, x),
$$
which is defined on the whole interval $(t_0, \infty)$.
Then there exist sequences $s_n, t_n \to \infty$ such that
$$
\lim_{n \to \infty} g(s_n, x_\infty) = 0
= \lim_{n \to \infty} g(t_n, x^\infty).
$$
\end{theorem}
\begin{corollary}
\label{cor:auxiliar3}
Let the assumptions of Theorem~\ref{theo:auxiliar2} be satisfied. Then
\begin{itemize}
\item[a)] $\liminf_{t \to \infty} g(t, x_\infty) \geq 0 \geq \limsup_{t \to \infty} g(t, x_\infty)$,
\item[b)] $\liminf_{t \to \infty} g(t, x^\infty) \geq 0 \geq \limsup_{t \to \infty} g(t, x^\infty)$.
\end{itemize}
\end{corollary}
As the assumptions of Theorem~\ref{theo:auxiliar1} are satisfied, it suffices to show
that $X_2$ is a uniform weak repeller for $X_1$. Let $r = y + z + w$. Then,
\begin{equation*}
\begin{split}
r' &= \beta_2(N)(yx + \eta_C zx + \eta_A wx) + d_A w r - \mu r - d_A w \\
&\leq \beta_2(N)(1 + \eta_C + \eta_A) - \frac{\Lambda}{N} r + d_A (r-1),
\end{split}
\end{equation*}
using the fact that $x, y, z, w, r \leq 1$. This implies that
\begin{equation}
\label{eq:betaN:infty}
\begin{split}
&\frac{\Lambda}{N^\infty} r^\infty + (1-r^\infty)d_A
\leq \beta_2(N^\infty)(1 + \eta_C + \eta_A)\\
&  \Rightarrow \beta_2(N^\infty) \geq \frac{\Lambda r^\infty}{N^\infty
\left(1 + \eta_C + \eta_A\right)}
+ \frac{(1-r^\infty)d_A}{1 + \eta_C + \eta_A}.
\end{split}
\end{equation}
From the equation of $N$ in \eqref{system:xyzw} we have
\begin{equation*}
\liminf_{t \to \infty} \frac{1}{N} \frac{dN}{dt} \geq \frac{\Lambda}{N^\infty}
- \left(\mu + d_A w^\infty \right) \geq \frac{\Lambda}{N^\infty}
- \left(\mu + d_A r^\infty \right).
\end{equation*}
Hence $N$ increases exponentially, unless
\begin{equation}
\label{eq:lambda:Ninfty}
\frac{\Lambda}{N^\infty} \leq \mu + d_A r^\infty,
\quad \text{that is,} \quad
\frac{1}{d_A} \left( \frac{\Lambda}{N^\infty} - \mu \right) \leq r^\infty.
\end{equation}
Combining \eqref{eq:betaN:infty} and \eqref{eq:lambda:Ninfty}, we obtain that
\begin{equation}
\label{eq:betaN:leq}
\beta_2(N^\infty) \geq \left(\frac{\Lambda}{d_A N^\infty (1 + \eta_C + \eta_A)}
- \frac{1}{1+\eta_C + \eta_A} \right) \left(\frac{\Lambda}{N^\infty} - \mu \right)
+ \frac{d_A}{1+\eta_C + \eta_A},
\end{equation}
as $\beta_2(0) = 0$ and $\beta_2(N)$ is continuous at $0$, $N^\infty \geq \varepsilon > 0$
with $\varepsilon$ not depending on the initial data. From \eqref{eq:betaN:leq}
we see that we can relax $\beta_2(0) = 0$ and require
\begin{equation*}
\beta_2(0) < \left(\frac{\Lambda}{d_A N^\infty (1 + \eta_C + \eta_A)}
- \frac{1}{1+\eta_C + \eta_A} \right) \left(\frac{\Lambda}{N^\infty}
- \mu \right) + \frac{d_A}{1+\eta_C + \eta_A}.
\end{equation*}
This concludes the proof.
\end{proof}

The disease is persistent in the population if the fraction of the infected
and AIDS cases is bounded away from zero. If the population dies out and the
fraction of the infected and AIDS remains bounded away from zero, we would
still say that the disease is persistent in the population.

\begin{proposition}
\label{prop:persistent}
Let $\beta_2(\infty) (1 + \eta_C + \eta_A) \geq \frac{\Lambda}{N^\infty} r^\infty$.
Then the disease is uniformly weakly persistent insofar as
\begin{equation*}
r^\infty = \limsup_{t \to \infty} r(t) \geq \varepsilon,
\end{equation*}
with $\varepsilon > 0$ being independent of the initial data, provided that $r(0) > 0$.
\end{proposition}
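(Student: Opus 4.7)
The plan is to prove Proposition~\ref{prop:persistent} by contradiction, using the fluctuation machinery of Corollary~\ref{cor:auxiliar3} to parallel the structure of the proof of Theorem~\ref{theo:persistence}, but now exploiting a \emph{lower}-type differential inequality for the persistence functional $r = y + z + w$. The strategy is to produce an inequality of the form $r'(t) \geq c\, r(t)$ with $c > 0$ uniform in the initial data, which is incompatible with $\limsup_{t\to\infty} r(t)$ being arbitrarily small.

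Suppose, for contradiction, that uniform weak persistence fails. Then for each $\varepsilon > 0$ there is a solution with $r(0) > 0$ but $r^\infty < \varepsilon$, with $\varepsilon$ to be fixed small at the end. For such a solution, eventually $y(t), z(t), w(t) \leq r(t) < \varepsilon$, and hence $x(t) = 1 - y(t) - z(t) - w(t) > 1 - 3\varepsilon$. Applying Corollary~\ref{cor:auxiliar3} to $\dot N = \Lambda - (\mu + d_A w)N$ together with the ultimate bound $w \leq \varepsilon$ yields $N^\infty, N_\infty \to \Lambda/\mu$ as $\varepsilon \to 0$, so $N(t)$ lies within $O(\varepsilon)$ of $N^\infty$ for all sufficiently large $t$.

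The crucial step is a lower bound for $r'$. Since $\eta_C \leq 1 \leq \eta_A$ and $y, z, w \geq 0$,
\begin{equation*}
y + \eta_C z + \eta_A w \;\geq\; \eta_C(y + z + w) \;=\; \eta_C \, r.
\end{equation*}
Combining this with $x \geq 1 - 3\varepsilon$ and $w \leq r$ in the identity
\begin{equation*}
r' \;=\; \beta_2(N)(y + \eta_C z + \eta_A w)\, x + d_A\, w\, r - \mu\, r - d_A w
\end{equation*}
derived as in the proof of Theorem~\ref{theo:persistence} yields, for all large $t$,
\begin{equation*}
r'(t) \;\geq\; \bigl[\, \eta_C \,\beta_2(N(t))\,(1 - 3\varepsilon) - \mu - d_A \,\bigr]\, r(t).
\end{equation*}

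It then remains to verify that the bracketed coefficient is bounded below by a strictly positive constant $c > 0$, independent of the initial data, once $\varepsilon$ is small enough. Here the hypothesis $\beta_2(\infty)(1 + \eta_C + \eta_A) \geq \tfrac{\Lambda}{N^\infty} r^\infty$ is invoked via monotonicity of $\beta_2$ and the fact that $N(t)$ is ultimately close to $\Lambda/\mu$, ensuring that $\beta_2(N(t))$ is bounded below by a quantity strictly exceeding $(\mu + d_A)/\eta_C$. A Gr\"onwall-type integration then gives $r(t) \geq r(t_0)\, e^{c(t - t_0)/2}$ for all large $t$, which contradicts $r(t) < \varepsilon$. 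The main obstacle is precisely this last step: translating the structural assumption on $\beta_2(\infty)$, which itself mentions $r^\infty$, into a uniformly positive invasion rate via the fluctuation framework; this is the analogue in this setting of a $R_0 > 1$ threshold argument, but the nonstandard coupling of $N$ and $\beta_2$ along $\limsup$ sequences requires careful bookkeeping.
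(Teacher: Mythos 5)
The paper itself gives no proof of Proposition~\ref{prop:persistent}; it only points to \cite{Thieme:1993}, so there is no argument of record to match yours against. Your overall architecture --- argue by contradiction, use the fluctuation machinery of Corollary~\ref{cor:auxiliar3} to pin $N(t)$ near $\Lambda/\mu$ when $r^\infty$ is small, and then extract a lower differential inequality $r' \geq c\,r$ --- is the standard Thieme-style route, and your algebra up to
\begin{equation*}
r'(t) \;\geq\; \bigl[\, \eta_C\,\beta_2(N(t))\,(1-3\varepsilon) - \mu - d_A \,\bigr]\, r(t)
\end{equation*}
is correct (the identity for $r'$, the estimate $y+\eta_C z+\eta_A w \geq \eta_C r$ from $\eta_C \leq 1 \leq \eta_A$, and $d_A w r - d_A w \geq -d_A r$ all check out).

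The genuine gap is the step you yourself flag as ``the main obstacle'': you assert that the hypothesis $\beta_2(\infty)(1+\eta_C+\eta_A) \geq \frac{\Lambda}{N^\infty}r^\infty$, together with monotonicity of $\beta_2$ and $N(t)\approx \Lambda/\mu$, forces $\eta_C\,\beta_2(N(t)) > \mu + d_A$. It does not. That hypothesis only provides the lower bound $\beta_2(\infty) \geq \frac{\Lambda\,r^\infty}{N^\infty(1+\eta_C+\eta_A)}$, whose right-hand side tends to $0$ precisely in the contradiction regime $r^\infty < \varepsilon$ --- so in the scenario where you need it, the hypothesis is essentially vacuous and supplies no uniform positive lower bound on $\beta_2$. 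Consequently the bracketed coefficient can be negative, the Gr\"onwall step collapses, and no contradiction is reached. A correct proof needs an actual invasion-threshold condition (an $R_1>1$-type inequality evaluated at the persistence attractor of $N$), which must either be derived from a differently stated hypothesis or imported from the abstract persistence theorems of \cite{Thieme:1993}; it cannot be manufactured from the inequality as written. Separately, even granting a genuine threshold, your crude bound $y+\eta_C z+\eta_A w\geq \eta_C r$ throws away most of the force of infection when $\eta_C$ is small, so the constant you would need would be $\eta_C\beta_2 > \mu+d_A$ rather than the sharp $R_1>1$ condition --- a further (though lesser) loss.
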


The proof of Proposition~\ref{prop:persistent} is outlined in \cite{Thieme:1993}.


\subsubsection{Local stability of disease-free equilibrium}
\label{sec:DFE:HIVmodel}

The model \eqref{model:HIV:only} has a disease-free equilibrium (DFE),
obtained by setting the right-hand sides of the equations
in the model to zero, given by
\begin{equation*}
\Sigma_0 = \left(S^*, I^*_H, A^*, C^*_H  \right)
= \left(\frac{\Lambda}{\mu},0, 0,0  \right).
\end{equation*}
The linear stability of $\Sigma_0$ can be established using the
next-generation operator method on the system \eqref{model:HIV:only}.
Following \cite{van:den:Driessche:2002}, the basic reproduction number is obtained
as the spectral radius of the matrix $F  V^{-1}$ at the DFE, $\Sigma_0$, with
$F$ and $V$ given by, respectively,
\begin{equation*}
F =  \left[ \begin {array}{cccc} 0&0&0&0\\ \noalign{\medskip}
\lambda_H &{\frac {{\beta_2}\,S}{{N}}}&{\frac {{\beta_2}\,{
\eta_A}\,S}{{N}}}&{\frac {{\beta_2}\,{\eta_C}\,S}{{N}}
}\\ \noalign{\medskip}
0&0&0&0\\ \noalign{\medskip}
0&0&0&0\end {array}
 \right]
\end{equation*}
and
\begin{equation*}
V = \left[ \begin {array}{cccc}
\lambda_H +\mu&{\frac {{
\beta_2}\,S}{{N}}}&{\frac {{\beta_2}\,{\eta_A}\,S}{{N}
}}&{\frac {{\beta_2}\,{\eta_C}\,S}{{N}}}\\ \noalign{\medskip}
0&C_1&-{\alpha_1}&-{\omega_1}\\ \noalign{\medskip}
0&-{\rho_1}&C_2&0\\ \noalign{\medskip}
0&-\phi&0&C_3
\end{array} \right],
\end{equation*}
where $C_1 = \rho_1 + \phi + \mu$, $C_2 = \alpha_1 + \mu + d_A$, $C_3 = \omega_1 + \mu$.
The basic reproduction number is given by the dominant eigenvalue
of the matrix $F  V^{-1}$, that is,
\begin{equation}
\label{eq:R0:HIV:only}
R_1 = \frac{{\beta_2}\,\Lambda\, \left( C_3(C_2 +{\eta_A}\,{\rho_1})
+{\eta_C}\,\phi C_2 \right)}{N \mu \left[ \mu \left(C_3(\rho_1 + C_2)
+ C_2 \phi + \rho_1 d_A \right) + \rho_1 \omega_1 d_A  \right]}.
\end{equation}
The basic reproduction number $R_1$ represents the expected average number of new HIV infections
produced by a single HIV-infected individual when in contact with a completely susceptible
population \cite{van:den:Driessche:2002}.

\begin{remark}
\label{rem3}
The next-generation matrix is one of the most well known methods in epidemiology
to compute the basic reproduction number for a compartmental model
of the spread of infectious diseases. To calculate the basic reproduction number
through this method, the whole population is divided into $n$ compartments
in which there are $m<n$ infected compartments. Let $x_i$, $i=1,2,\ldots,m$,
be the numbers of infected individuals in the $i$th infected compartment at time $t$.
Now, the epidemic model is $x_i' = F_i(x) - V_i(x)$ or, in vector form,
$x' = F(x) - V(x)$. Let $x_0$ denote here the disease-free equilibrium state.
The Jacobian matrices of $F(x)$ and $V(x)$ are, respectively,
$$
DF(x) = \left[
\begin{array}{cc}
F & 0 \\
0 & 0 \\
\end{array}
\right]
\quad \text{ and } \quad
DV(x) = \left[
\begin{array}{cc}
V & 0 \\
J_3 & J_4 \\
\end{array}
\right],
$$
where $F$ and $V$ are the $m \times m$ matrices given by
$$
F = \left[\frac{\partial F_i(x_0)}{\partial x_j}\right],
\quad
V = \left[\frac{\partial V_i(x_0)}{\partial x_j}\right].
$$
The matrix $F V^{-1}$ is known as the next-generation matrix
and its spectral radius is the basic reproduction number of the model.
The reader interested in all the details about the computation of the basic reproduction
number by the next-generation matrix is referred to \cite{MR1057044,van:den:Driessche:2002}
or any good book on dynamical modeling and analysis of epidemics (e.g., \cite{MR1882991}).
\end{remark}

\begin{lemma}
The disease free equilibrium $\Sigma_0$ is locally asymptotically stable
if $R_1 < 1$, and unstable if $R_1 > 1$.
\end{lemma}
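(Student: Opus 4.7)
The plan is to invoke Theorem~2 of van den Driessche and Watmough \cite{van:den:Driessche:2002}, which asserts exactly the desired dichotomy: under the structural hypotheses (A1)--(A5) of that paper, the disease-free equilibrium is locally asymptotically stable whenever the spectral radius of $FV^{-1}$ is less than one, and unstable whenever it exceeds one. Since $R_1$ has already been obtained as $\rho(FV^{-1})$ at $\Sigma_0$, the conclusion will follow once the hypotheses are checked. This is the cleanest route and avoids any direct eigenvalue computation on a $4\times 4$ Jacobian.

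First, I would verify the structural assumptions for the decomposition $x' = \mathcal{F}(x) - \mathcal{V}(x)$ of the infected-compartment subsystem $(I_H, A, C_H)$ (the susceptible class $S$ is an uninfected compartment and is handled separately): $\mathcal{F}$ collects only new HIV infections $\lambda_H S$ entering $I_H$, while $\mathcal{V}$ collects all other transitions. Entrywise, $F$ has non-negative entries and $V$ is a non-singular $M$-matrix (diagonal dominance after removing the entries encoded in $F$), since every off-diagonal entry of $V$ comes from a progression or treatment rate with a minus sign and every diagonal entry is a positive sum of outflow rates $C_1, C_2, C_3$. Second, I would check that the disease-free subsystem $\dot S = \Lambda - \mu S$ has a locally asymptotically stable equilibrium at $S^\ast = \Lambda/\mu$ in the absence of infection; this is immediate because the single eigenvalue is $-\mu < 0$.

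With these hypotheses in hand, Theorem~2 of \cite{van:den:Driessche:2002} directly yields the statement. As an internal consistency check, I would also note that $\Sigma_0$ evaluated gives $S^\ast/N^\ast = 1$, so the expression \eqref{eq:R0:HIV:only} reduces to a ratio depending only on the parameters, and the sign condition $R_1 \lessgtr 1$ translates into the statement $s(F-V)\lessgtr 0$ on the stability modulus used in the cited theorem.

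The only subtle point is the bookkeeping in the matrices $F$ and $V$ displayed above: as written, $F$ and $V$ include the susceptible block, so when applying the van den Driessche--Watmough framework I would restrict to the $3\times 3$ infected sub-blocks (removing the $\lambda_H$-row and the $S$-column), and observe that at the DFE $\lambda_H = 0$ so the full Jacobian block-triangularizes, with the uninfected block contributing only the eigenvalue $-\mu$. This cleanly separates the stability of the uninfected directions from the threshold condition $R_1 < 1$, which is the main (and essentially the only) obstacle, and once it is handled the proof is complete.
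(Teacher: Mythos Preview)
Your proposal is correct and, like the paper, ultimately rests on Theorem~2 of van den Driessche and Watmough \cite{van:den:Driessche:2002}. The paper also opens by citing that theorem, but then supplements the citation by writing down the full $4\times 4$ Jacobian $M(\Sigma_0)$ and checking that its trace is negative and its determinant is positive when $R_1<1$. Your route instead verifies the structural hypotheses (A1)--(A5), confirms the stability of the disease-free subsystem $\dot S=\Lambda-\mu S$, and uses the block-triangular structure to isolate the $3\times 3$ infected block. Both arguments reach the conclusion through the same cited result; your hypothesis verification is more directly in the spirit of the theorem being invoked, whereas the paper's trace/determinant check, taken on its own, is necessary but not sufficient for a $4\times 4$ system (Routh--Hurwitz requires additional conditions), so the real work in the paper's version is still carried by the citation. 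In that sense your argument is the tidier implementation of essentially the same idea.
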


\begin{proof}
Following Theorem~2 of \cite{van:den:Driessche:2002}, the disease-free equilibrium,
$\Sigma_0$, is locally asymptotically stable if all the eigenvalues of the Jacobian
matrix of the system \eqref{model:HIV:only}, here denoted by $M\left(\Sigma_0\right)$,
computed at the DFE $\Sigma_0$, have negative real parts.
The Jacobian matrix of the system \eqref{model:HIV:only}
at disease free equilibrium $\Sigma_0$ is given by
\begin{equation}
\label{eq:Jacob:DFE}
M \left( \Sigma_0 \right)
= \left[ \begin {array}{cccc}
-\mu&-{\frac {\beta_2\,\Lambda}{\mu\,N}}
&-{\frac {\beta_2\,\eta_A\,\Lambda}{\mu\,N}}&-{\frac {\beta_2\,\eta_C\,
\Lambda}{\mu\,N}}\\ \noalign{\medskip}
0&{\frac {\beta_2\,\Lambda}{\mu
\,N}}-C_1&{\frac {\beta_2\,\eta_A\,\Lambda}{\mu\,N}}+
\alpha_1&{\frac {\beta_2\,\eta_C\,\Lambda}{\mu\,N}}+\omega_1
\\ \noalign{\medskip}
0&\rho_1&C_2&0\\ \noalign{\medskip}
0&\phi&0&C_3
\end {array} \right].
\end{equation}
One has
\begin{equation*}
\textrm{trace}\left[ M\left(\Sigma_0\right) \right] =
-\mu+{\frac {\beta_2\,\Lambda}{\mu\,N}}-(C_1 + C_2 + C_3) < 0
\end{equation*}
and
\begin{multline*}
\det \left[ M\left(\Sigma_0\right) \right] = -\frac{\beta_2\,\Lambda}{N}
\left[ C_3(C_2 + \rho_1 \eta_A) + \phi \eta_C C_2 \right]\\
+ \mu \left[ \mu \left(C_3(\rho_1 + C_2) + C_2 \phi + \rho_1 d_A \right)
+ \rho_1 \omega_1 d_A \right] > 0
\end{multline*}
for $R_1  < 1$.  We have just proved that the disease free equilibrium $\Sigma_0$ of model
\eqref{model:HIV:only} is locally asymptotically stable if $R_1 < 1$,
and unstable if $R_1 > 1$.
\end{proof}


\subsubsection{Global stability of disease-free equilibrium (DFE)}

Following \cite{CChavez:Feng:Huang:2002}, let us rewrite
the submodel system \eqref{model:HIV:only} as
\begin{equation}
\label{mod:global:stab:DFE:HIV}
\begin{split}
&\frac{dX}{dt} = F(X, Z), \\
&\frac{dZ}{dt} = G(X, Z), \quad G(X, 0) = 0,
\end{split}
\end{equation}
where $X = S$ and $Z = (I_H, A, C_H)$,
with $X \in \R_+$ denoting the total number of uninfected individuals
and $Z \in \R^3_+$ denoting the total number of infected individuals.
The disease-free equilibrium is now denoted by
\begin{equation*}
U_0 = (X_0, 0), \quad \text{where} \, \, X_0
= \left(\frac{\Lambda}{\mu}, 0  \right).
\end{equation*}
The conditions (H1) and (H2) below must be met
to guarantee global asymptotically stability:
\begin{itemize}
\item[(H1)] for $\frac{dX}{dt} = F(X, 0)$, $U_0$ is globally asymptotically stable;
\item[(H2)] $G(X, Z) = AZ - \hat{G}(X, Z)$, $\hat{G}(X, Z) \geq 0$ for
$(X, Z) \in \mathcal{G}$, where $A = D_Z G(U_0, 0)$ is a Metzler matrix
(the off diagonal elements of $A$ are nonnegative) and $\mathcal{G}$
is the region where the model makes biological sense.
\end{itemize}

\begin{theorem}
The fixed point $U_0 = (X_0, 0)$ is a globally asymptotically stable equilibrium
of \eqref{model:HIV:only} provided $R_1 < 1$
and the assumptions (H1) and (H2) are satisfied.
\end{theorem}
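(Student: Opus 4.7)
The plan is to apply the Castillo-Chavez--Feng--Huang framework described in (H1)--(H2). With the splitting $X = S$ and $Z = (I_H, A, C_H)$, I need to verify (H1) and (H2) for the submodel \eqref{model:HIV:only}; once both are in place, the cited theorem of \cite{CChavez:Feng:Huang:2002} gives global asymptotic stability of $U_0$ whenever the spectral radius of the linearisation at the DFE is less than one, which in our notation is exactly $R_1 < 1$.

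First I would verify (H1). Setting $Z=0$ reduces the uninfected dynamics to $\dot{S}=\Lambda-\mu S$, a one-dimensional linear ODE whose unique equilibrium $S=\Lambda/\mu$ is globally attracting on $\mathbb{R}_+$. Hence $X_0 = \Lambda/\mu$ is a globally asymptotically stable equilibrium of $\dot{X}=F(X,0)$ in $\Omega_1$ restricted to the $Z=0$ face.

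Next I would verify (H2). I would compute $A = D_Z G(U_0,0)$ using $S/N=1$ at the DFE, obtaining
\begin{equation*}
A=\begin{pmatrix}
\beta_2 - C_1 & \beta_2\eta_A+\alpha_1 & \beta_2\eta_C+\omega_1\\
\rho_1 & -C_2 & 0\\
\phi & 0 & -C_3
\end{pmatrix},
\end{equation*}
whose off-diagonal entries are all nonnegative, so $A$ is Metzler. Then $\hat{G}(X,Z):=AZ-G(X,Z)$ has second and third components identically zero (those equations are already linear), while the first component simplifies to
\begin{equation*}
\hat{G}_1(X,Z)=\beta_2\bigl(I_H+\eta_C C_H+\eta_A A\bigr)\frac{N-S}{N}.
\end{equation*}
Since $N = S + I_H + A + C_H$ with all state variables nonnegative in $\Omega_1$, we have $0\leq S\leq N$, so $\hat{G}_1(X,Z)\geq 0$. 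This establishes (H2) on the biologically feasible region $\mathcal{G}=\Omega_1$.

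With both conditions verified and $R_1<1$ assumed, the hypotheses of Theorem~2 in \cite{CChavez:Feng:Huang:2002} are met and $U_0$ is globally asymptotically stable. The only nontrivial step is checking the sign of $\hat{G}_1$; the potential obstacle there is that the force of infection is of standard-incidence type $\beta_2\cdot(\cdot)/N$, which could in principle produce a sign change, but the factor $(N-S)/N\geq 0$ rescues it cleanly because $S$ is bounded above by $N$ throughout $\Omega_1$. All remaining steps reduce to routine matrix bookkeeping.
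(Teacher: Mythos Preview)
Your proposal is correct and follows exactly the same route as the paper: both apply the Castillo-Chavez--Feng--Huang criterion by verifying (H1) via the scalar ODE $\dot S=\Lambda-\mu S$ and (H2) by computing $A=D_ZG(U_0,0)$, checking it is Metzler, and showing $\hat G_1\ge 0$ while $\hat G_2=\hat G_3=0$. Your expression $\hat G_1(X,Z)=\beta_2(I_H+\eta_C C_H+\eta_A A)\,(N-S)/N$ is in fact the cleaner (and correct) form of what the paper writes as $\beta_2(1-1/N)(\cdots)$, and your justification $0\le S\le N$ in $\Omega_1$ is precisely the point that makes the sign work.
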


\begin{proof}
We have
\begin{equation*}
\frac{dX}{dt} = F(X, Z)
= \left[
\begin{array}{c}
\Lambda - \lambda_H S - \mu S
\end{array} \right],
\end{equation*}
\begin{equation*}
F(X, 0) = \left[ \begin{array}{c}
\Lambda - \mu S
\end{array} \right],
\end{equation*}
\begin{equation*}
\frac{dZ}{dt} = G(X, Z)
 = \left[
\begin{array}{c}
\lambda_H(t) S(t) - C_1 I_H(t) + \alpha_1 A(t)  + \omega_1 C_H \\[0.2 cm]
\rho_1 I_H(t) - C_2 A(t)\\[0.2 cm]
\phi I_H(t) - C_3 C_H(t)
\end{array}
\right],
\end{equation*}
and $G(X, 0) = 0$. Therefore,
\begin{equation*}
\frac{dX}{dt} = F(X, 0) = \left[
\begin{array}{c}
\Lambda - \mu S \\[0.2 cm]
- \mu R_T
\end{array} \right],
\end{equation*}
\begin{equation*}
A = D_Z G(X_0, 0) =
\left[ \begin{array}{ccc}
{\frac {\beta_2 \,\Lambda}{N\mu}}-C_1&{\frac {\beta_2\,\eta_A\,\Lambda}{N\mu}}
+\alpha_1&{\frac{\beta_2 \,\eta_C \,\Lambda}{N\mu}}+\omega_1\\
\noalign{\medskip} \rho_1 &-C_2&0\\
\noalign{\medskip} \phi&0&-C_3
\end{array} \right]
\end{equation*}
and
\begin{equation}
\label{eq:hat:G}
\hat{G}(X, Z)
= \left[
\begin{array}{c}
\hat{G}_1(X, Z)\\[0.2 cm]
\hat{G}_2(X, Z)\\[0.2 cm]
\hat{G}_3(X, Z)
\end{array}
\right]
= {\footnotesize{  \left[ \begin {array}{c}
\beta_2(1-\frac{1}{N})(I_H + \eta_A A + \eta_C C_H)\\
\noalign{\medskip}
0\\ \noalign{\medskip}
0
\end {array} \right] }}.
\end{equation}
It follows that $\hat{G}_1(X, Z) \geq 0$, $\hat{G}_2(X, Z) = \hat{G}_3(X, Z) = 0$.
Thus, $\hat{G}(X, Z) \geq 0$. Conditions (H1) and (H2) are satisfied,
and we conclude that $U_0$ is globally asymptotically stable for $R_1 < 1$.
\end{proof}


\subsubsection{Existence of an endemic equilibrium}

To find conditions for the existence of an equilibrium for which HIV is endemic
in the population (i.e., at least one of $I_H^*$, $A^*$ or $C_H^*$ is non-zero),
denoted by $\Sigma_H = \left(S^*, I_H^*, A^*, C_H^* \right)$, the equations
in \eqref{model:HIV:only} are solved in terms of the force of infection
at steady-state ($\lambda_H^*$), given by
\begin{equation}
\label{eq:lambdaH:ast}
\lambda_H^* = \frac{\beta_2 \left( I_H^* + \eta_A A^* + \eta_C \, C_H^* \right)}{N^*}.
\end{equation}
Setting the right hand sides of the model to zero (and noting that
$\lambda_H = \lambda_H^*$ at equilibrium) gives
\begin{equation}
\label{eq:end:equil:HIV:only}
S^* = \frac{\Lambda}{\lambda_H^* + \mu}, \quad I_H^*
=-\frac{\lambda_H^* \Lambda C_2 C_3}{D}, \quad A^*
= -\frac{\rho_1 \lambda_H^* \Lambda C_3}{D}, \quad C_H^*
=- \frac{\phi \lambda_H^* \Lambda C_2}{D},
\end{equation}
with
$D = -(\lambda_H^* + \mu)(\mu \left(C_3(\rho_1 + C_2)
+ C_2 \phi + \rho_1 d_A \right) + \rho_1 \omega_1 d_A)$.
Using \eqref{eq:end:equil:HIV:only} in the expression
for $\lambda_H^*$ in \eqref{eq:lambdaH:ast} shows that
the nonzero (endemic) equilibria of the model satisfy
\begin{equation*}
\lambda_H^* = \frac{\Lambda \beta_2 \left(C_2 C_3 + \eta_A \rho_1 C_3
+ \eta_C \phi C_2\right)}{N \left[ \mu \left(C_3(\rho_1 + C_2)
+ C_2 \phi + \rho_1 d_A \right) + \rho_1 \omega_1 d_A  \right]}-\mu,
\end{equation*}
that is,
\begin{equation*}
\lambda_H^* = -\mu (1-R_1).
\end{equation*}
The force of infection at the steady-state $\lambda_H^*$ is positive,
only if $R_1 > 1$. We have just proved the following result.

\begin{lemma}
The submodel system \eqref{model:HIV:only} has
a unique endemic equilibrium whenever $R_1 > 1$.
\end{lemma}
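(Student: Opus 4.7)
The plan is to reduce existence and uniqueness of the endemic equilibrium to a single scalar equation for the equilibrium force of infection $\lambda_H^\ast$ defined by \eqref{eq:lambdaH:ast}, and then show that this equation has exactly one positive root precisely when $R_1>1$.

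First I would set all time derivatives in \eqref{model:HIV:only} to zero and treat $\lambda_H^\ast$ as an unknown parameter. The $S$-equation immediately gives $S^\ast = \Lambda/(\lambda_H^\ast+\mu)$. The remaining three equations for $I_H^\ast, A^\ast, C_H^\ast$ form a linear system whose coefficient matrix is essentially the $V$-block from Section~\ref{sec:DFE:HIVmodel}; I would solve it by direct elimination, using $A^\ast = \rho_1 I_H^\ast/C_2$ and $C_H^\ast = \phi I_H^\ast/C_3$ from the last two equations, and then substituting into the $I_H$-equation to solve for $I_H^\ast$ in terms of $\lambda_H^\ast$. This produces the closed-form expressions \eqref{eq:end:equil:HIV:only}, with the denominator $D$ strictly negative, so that $I_H^\ast,A^\ast,C_H^\ast$ are all strictly positive whenever $\lambda_H^\ast>0$. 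Signs must be tracked carefully at this step, which is the only place one could slip up.

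Next, I would substitute these expressions back into the defining identity \eqref{eq:lambdaH:ast} with $N^\ast = S^\ast + I_H^\ast + A^\ast + C_H^\ast$. Because each of $I_H^\ast,A^\ast,C_H^\ast$ is proportional to $\lambda_H^\ast$, one factor of $\lambda_H^\ast$ cancels (the spurious root $\lambda_H^\ast=0$ corresponds to $\Sigma_0$), and the remaining self-consistency relation is affine in $\lambda_H^\ast$. Rearranging yields $\lambda_H^\ast = -\mu(1-R_1) = \mu(R_1-1)$, where the coefficient is recognized as the basic reproduction number \eqref{eq:R0:HIV:only}. Since this equation has a unique solution in $\lambda_H^\ast$, strictly positive iff $R_1>1$, back-substitution into \eqref{eq:end:equil:HIV:only} and the formula for $S^\ast$ produces the unique endemic equilibrium $\Sigma_H=(S^\ast,I_H^\ast,A^\ast,C_H^\ast)$, all components positive.

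The hard part, modulo bookkeeping, is recognizing that the quantity obtained after cancellation really is $R_1$: the tedious rational expression involving $C_1,C_2,C_3,\rho_1,\phi,\omega_1$ and $d_A$ must be matched term-by-term with \eqref{eq:R0:HIV:only}. No fixed-point or degree argument is needed, since the reduced equation is linear in $\lambda_H^\ast$; linearity is precisely what delivers uniqueness for free and distinguishes this situation from models with exogenous reinfection or backward bifurcations, where a quadratic (or higher) relation would require a separate discriminant analysis.
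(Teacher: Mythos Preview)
Your proposal is correct and follows essentially the same route as the paper: parametrize the steady state by the equilibrium force of infection $\lambda_H^\ast$, solve the resulting linear system to obtain \eqref{eq:end:equil:HIV:only}, substitute back into \eqref{eq:lambdaH:ast}, cancel the trivial root $\lambda_H^\ast=0$, and read off the linear relation $\lambda_H^\ast=\mu(R_1-1)$. Your write-up is in fact slightly more explicit than the paper's about why the reduced equation is affine and why this immediately gives uniqueness without any bifurcation-theoretic machinery.
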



\subsubsection{Local stability of the endemic equilibrium}

In what follows we prove the local asymptotic stability of the endemic equilibrium
$\Sigma_H$, using the center manifold theory \cite{Carr:1981},
as described in \cite[Theorem~4.1]{CChavez_Song_2004}, with
$\Sigma_H = (S^*, I_H^*, A^*, C_H^*)$ and each of its components given as in \eqref{eq:end:equil:HIV:only}.
To apply this method, the following simplification and change of variables are first made.
Let $S = x_1$, $I_H = x_2$, $A = x_3$ and $C_H = x_4$, so that $N = x_1 + x_2 +x_3 + x_4$.
Further, by using vector notation $X = (x_1, x_2, x_3, x_4)^T$, the submodel \eqref{model:HIV:only}
can be written in the form $\frac{dX}{dt} = (f_1, f_2, f_3, f_4)^T$, as follows:
\begin{equation}
\label{eq:syst:xyzw:HIV}
\begin{split}
\frac{dx_1}{dt} &= f_1 = \Lambda - \frac{\beta_2 \left(x_2 + \eta_A x_3
+ \eta_C x_4  \right)}{x_1 + x_2 +x_3 + x_4} x_1 - \mu x_1,\\[0.2 cm]
\frac{dx_2}{dt} &= f_2 = \frac{\beta_2
\left(x_2 + \eta_A x_3 + \eta_C x_4  \right)}{x_1 + x_2 +x_3 + x_4}
x_1 - C_1 x_2 + \alpha_1 x_3  + \omega_1 x_4,\\[0.2 cm]
\frac{dx_3}{dt} &= f_3 = \rho_1 x_2 - C_2 x_3, \\[0.2 cm]
\frac{dx_4}{dt} &= f_4 = \phi x_2 - C_3 x_4.
\end{split}
\end{equation}
The basic reproduction number of the submodel \eqref{model:HIV:only}
is given by \eqref{eq:R0:HIV:only}. Choose as bifurcation parameter
$\beta^*$, by solving for $\beta_2$ from $R_1 = 1$:
\begin{equation*}
\beta^* = \frac{ \mu \left(C_3(\rho_1 + C_2) + C_2 \phi + \rho_1 d_A \right)
+ \rho_1 \omega_1 d_A }{ C_3(C_2 +{\eta_A}\,{\rho_1}) +{\eta_C}\,\phi C_2  }.
\end{equation*}
The submodel \eqref{model:HIV:only} has a disease free equilibrium given by
\begin{equation*}
\Sigma_{H} = (x_{10}, x_{20}, x_{30}, x_{40}) = \left(\frac{\Lambda}{\mu},0, 0,0  \right).
\end{equation*}
The Jacobian of the system \eqref{eq:syst:xyzw:HIV}, evaluated at
$\Sigma_{0}$, $M\left( \Sigma_0 \right)$, and with
$\beta_2 = \beta^*$, is given by \eqref{eq:Jacob:DFE}.
Note that the above linearized system, of the transformed system \eqref{eq:syst:xyzw:HIV}
with $\beta_2 = \beta^*$, has a zero eigenvalue which is simple.
Hence, the center manifold theory \cite{Carr:1981} can be used to analyze the dynamics
of \eqref{eq:syst:xyzw:HIV} near $\beta_2 = \beta^*$. In particular,
Theorem 4.1 in \cite{CChavez_Song_2004} is used to show the locally
asymptotically stability of the endemic equilibrium point
of \eqref{eq:syst:xyzw:HIV}, for $\beta_2$ near $\beta^*$.

The Jacobian $M(\Sigma_0)$ at $\beta_2 = \beta^*$ has a right eigenvector
(associated with the zero eigenvalue) given by $w = [w_1, w_2, w_3, w_4]^T$, where
\begin{equation*}
\begin{split}
w_1&=-{\frac{w_3 \, \mu(C_3(\rho_1+C_2)+C_2\phi+\rho_1 d_A)
+\rho_1 \omega_1 d_A }{C_3 \mu\,\rho_1}},\\
w_2&= {\frac{w_3\, C_2 }{\rho_1}},\\
w_3&= w_3 > 0,\\
w_4&= {\frac {C_2  \phi\, w_3}{C_3 \rho_1 }}.
\end{split}
\end{equation*}
Further, $M(\Sigma_{0})$ for $\beta_2 = \beta^*$ has a left eigenvector
$v = [v_1, v_2, v_3, v_4]$ (associated with the zero eigenvalue), where
\begin{equation*}
\begin{split}
v_1&=0,\\
v_2&= v_3 {\frac { C_2 C_3 +\eta_A \rho_1 C_3 +\eta_C \phi C_2}
{\mu (\eta_A C_1+ \eta_A  \omega_1+ \alpha_1)+\omega_1 \eta_A \rho_1
+ (\eta_C \phi + \omega_1)\alpha_1}},\\
v_3&= v_3 > 0, \\
v_4&= v_3 {\frac { \eta_C C_2
+ \mu (\omega_1 +\eta_C  \rho_1 +\eta_ C \phi) + \rho_1(\omega_1 \eta_A
+\eta_C d_A)  + (\omega_1 +\eta_C \phi)(\alpha_1 + d_A)}{\mu
\left(\eta_A C_1 +\eta_A  \omega_1 +  \alpha_1\right)
+\omega_1 (\eta_A \rho_1 + \alpha_1) +\eta_C \phi \alpha_1}}.
\end{split}
\end{equation*}
To apply Theorem~4.1 in \cite{CChavez_Song_2004} it is convenient to let
$f_k$ represent the right-hand side of the $k$th equation of the system
\eqref{eq:syst:xyzw:HIV} and let $x_k$ be the state variable whose derivative
is given by the $k$th equation for $k = 1, \ldots, 4$. The local stability
near the bifurcation point $\beta_2 = \beta^*$ is then determined by the
signs of two associated constants, denoted by $a$ and $b$, defined (respectively) by
\begin{equation*}
a = \sum_{k, i, j=1}^4 \, v_k w_i w_j \frac{\partial^2 f_k}{\partial x_i \partial x_j}(0, 0)
\quad \text{and} \quad b = \sum_{k, i =1}^4 \, v_k w_i
\frac{\partial^2 f_k}{\partial x_i \partial \phi}(0, 0)
\end{equation*}
with $\phi = \beta_2 - \beta^*$. Note that, in $f_k(0, 0)$, the first zero corresponds to the DFE,
$\Sigma_0$, for the subsystem \eqref{model:HIV:only}. In other words, $f_k(0, \phi) = 0$,
for $k=1, \ldots, 4$, if and only if the right-hand sides of the equations of \eqref{model:HIV:only}
is zero at $\Sigma_0$. Moreover, from $\phi = \beta_2 - \beta^*$ we have $\phi = 0$
when $\beta_2 = \beta^*$, which is the second component in $f_k(0, 0)$.

For the system \eqref{eq:syst:xyzw:HIV}, the associated non-zero partial
derivatives at the disease free equilibrium $\Sigma_0$ are given by
\begin{equation*}
\begin{split}
&\frac{\partial^2 f_1}{\partial x_2^2} = \frac{2 \beta^* \mu}{\Lambda}, \quad
\frac{\partial^2 f_1}{\partial x_2 \partial x_3}= \frac{\beta^* \mu (1+\eta_A)}{\Lambda}, \quad
\frac{\partial^2 f_1}{\partial x_2 \partial x_4} = \frac{\beta^* \mu (1+\eta_C)}{\Lambda},\\
&\frac{\partial^2 f_1}{\partial x_3^2} = \frac{2 \beta^* \mu \eta_A}{\Lambda}, \quad
\frac{\partial^2 f_1}{\partial x_3 \partial x_4} = \frac{\beta^* \mu (\eta_A+\eta_C)}{\Lambda}, \\
&\frac{\partial^2 f_1}{\partial x_4 \partial x_2} = \frac{\beta^* \mu (1+\eta_C)}{\Lambda}, \quad
\frac{\partial^2 f_1}{\partial x_4^2} = \frac{2 \beta^* \mu \eta_C }{\Lambda}, \\
&\frac{\partial^2 f_2}{\partial x_2^2} = \frac{2 \beta^* \mu}{\Lambda}, \quad
\frac{\partial^2 f_2}{\partial x_2 \partial x_3}= -\frac{\beta^* \mu (1+\eta_A)}{\Lambda}, \quad
\frac{\partial^2 f_2}{\partial x_2 \partial x_4} = -\frac{\beta^* \mu (1+\eta_C)}{\Lambda}, \\
&\frac{\partial^2 f_2}{\partial x_3^2} = -\frac{2 \beta^* \eta_A \mu}{\Lambda}, \quad
\frac{\partial^2 f_2}{\partial x_3 \partial x_4}= -\frac{\beta^* \mu (\eta_A+\eta_C)}{\Lambda},\\
&\frac{\partial^2 f_2}{\partial x_4^2} = -\frac{2 \beta^* \eta_C \mu}{\Lambda}, \quad
\frac{\partial^2 f_2}{\partial x_4 \partial x_2}= -\frac{\beta^* \mu (1+\eta_C)}{\Lambda}.
\end{split}
\end{equation*}
It follows from the above expressions that
\begin{multline*}
a= \frac{w_3^2 C_2 \phi\, \beta_2 \mu \left(\eta_A + \eta_C \right)}{\rho_1 \left(\omega_1+\mu \right)\Lambda}
-\frac{2 D_1 C_2^2 \left(C_3 + \phi(1 + \eta_C)\right)}{D_2 \rho_1^2 C_3}\\
-\frac{2 D_1 \left(C_2 C_3 \rho_1 \left(C_3(1+\eta_A) + \phi\right)
+ C_3^2 \eta_A \rho_1^2 + C_2^2 \phi^2 \eta_C \right)}{D_2 \rho_1^2 C_3^2},
\end{multline*}
with
\begin{equation*}
\begin{split}
D_1 &= v_3 w_3^{2} \beta_2 \mu \left( C_3 C_2 + C_3 \eta_A \rho_1 + \eta_C \phi C_2 \right),\\
D_2 &= \Lambda \left( \eta_A \rho_1 C_3 + C_3 \alpha_1
+ \eta_A \mu C_3 + \mu \phi + \eta_C \phi \alpha_1 \right).
\end{split}
\end{equation*}
For the sign of $b$, it can be shown that
the associated non-vanishing partial derivatives are
\begin{equation*}
\begin{split}
\frac{\partial^2 f_1}{\partial x_2 \partial \beta^*} &= -1,  \quad
\frac{\partial^2 f_1}{\partial x_3 \partial \beta^*} = -\eta_A, \quad
\frac{\partial^2 f_1}{\partial x_4 \partial \beta^*} = -\eta_C, \\
\frac{\partial^2 f_2}{\partial x_2 \partial \beta^*} &= -1, \quad
\frac{\partial^2 f_2}{\partial x_3 \partial \beta^*} = -\eta_A, \quad
\frac{\partial^2 f_2}{\partial x_4 \partial \beta^*} = \eta_C.
\end{split}
\end{equation*}
It also follows from the above expressions that
\begin{equation*}
b = {\frac { ((\eta_A + \eta_C) \rho_1 + C_2) \left( C_3 C_2 +C_3 \eta_A \rho_1
+ \eta_C \phi C_2 \right) v_3 w_3 }{ \left( (\eta_A \rho_1 +\alpha_1+ \eta_A \mu) C_3
+ \eta_A \mu \phi + \eta_C \phi \alpha_1 \right) \rho_1}}.
\end{equation*}
From the previous computations, we have $a < 0$ and $b > 0$.
Thus, using Theorem~4.1 of \cite{CChavez_Song_2004},
the following result is established.

\begin{theorem}
The endemic equilibrium $\Sigma_H$ is locally asymptotically stable
for the basic reproduction number $R_1$ \eqref{eq:R0:HIV:only} near 1.
\end{theorem}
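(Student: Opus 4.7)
The plan is to invoke Theorem~4.1 of \cite{CChavez_Song_2004}, which uses center manifold reduction to determine the local stability of the endemic branch bifurcating from the DFE at a critical parameter value. The bifurcation parameter has already been identified as $\phi = \beta_2 - \beta^*$, where $\beta^*$ is the unique value of $\beta_2$ satisfying $R_1 = 1$. The first step is therefore to verify the hypotheses of that theorem: that $M(\Sigma_0)$ in \eqref{eq:Jacob:DFE} has a simple zero eigenvalue at $\beta_2=\beta^*$ with all remaining eigenvalues having negative real part. Simplicity of the zero eigenvalue follows from the explicit structure of the right null eigenvector $w$ and left null eigenvector $v$ displayed in the excerpt, while negativity of the other eigenvalues is inherited from the DFE stability result (the sign of the determinant flips exactly at $R_1 = 1$, so for $\beta_2$ slightly below $\beta^*$ the DFE is stable, confirming that the remaining spectrum lies in the open left half-plane).

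Next, I would compute the two bifurcation coefficients
\begin{equation*}
a = \sum_{k,i,j=1}^{4} v_k w_i w_j \, \frac{\partial^2 f_k}{\partial x_i \partial x_j}(\Sigma_0, 0),
\qquad
b = \sum_{k,i=1}^{4} v_k w_i \, \frac{\partial^2 f_k}{\partial x_i \partial \phi}(\Sigma_0, 0).
\end{equation*}
The routine, but delicate, ingredient is the computation of the second partial derivatives of the term $\beta_2(x_2+\eta_A x_3+\eta_C x_4)x_1/(x_1+x_2+x_3+x_4)$ present in $f_1$ and $f_2$; these are obtained by straightforward quotient differentiation and evaluation at $\Sigma_0$, where $x_1/N = 1$ and $N = \Lambda/\mu$. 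Because the equations for $f_3$ and $f_4$ are linear in the state, they contribute nothing to $a$, and because $v_1 = 0$, only $f_2$ contributes to $a$ among the nonlinear equations; similarly $b$ reduces to a sum over $f_2$ alone. Substituting the explicit expressions for $v$ and $w$ yields the formulas for $a$ and $b$ displayed in the excerpt.

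The final step is to read off the signs. In $a$, every second partial $\partial^2 f_2/\partial x_i \partial x_j$ at $\Sigma_0$ is negative, while all components $v_2, v_3, v_4$ and $w_2, w_3, w_4$ are positive; hence $a < 0$. In $b$, the partials $\partial^2 f_2/\partial x_i\partial\phi$ at $\Sigma_0$ together with the positivity of the relevant eigenvector components produce $b > 0$. With $a < 0$ and $b > 0$, case (iv) of Theorem~4.1 of \cite{CChavez_Song_2004} applies: the bifurcation at $\beta_2 = \beta^*$ is forward (supercritical transcritical), so the endemic equilibrium $\Sigma_H$ that exists for $R_1 > 1$ is locally asymptotically stable for $R_1$ near $1$.

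The main obstacle is purely bookkeeping: one must keep track of the signs carefully, because many contributions to $a$ come with minus signs inherited from the partials of $f_2$, and one must confirm that the cancellation pattern indeed leaves $a$ strictly negative rather than zero. A secondary care point is to verify that the right and left eigenvectors have been correctly normalized so that the formulas for $a$ and $b$ from \cite{CChavez_Song_2004} apply verbatim; the sign of $b$ depends only on $v_k w_i$ products and not on the normalization, whereas the sign of $a$ is invariant under any positive rescaling of $v$ or $w$, so the conclusion is independent of this normalization convention.
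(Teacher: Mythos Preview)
Your proposal is correct and follows essentially the same approach as the paper: invoke Theorem~4.1 of \cite{CChavez_Song_2004} with $\beta_2$ as bifurcation parameter, compute the right and left null eigenvectors of $M(\Sigma_0)$ at $\beta_2=\beta^*$, evaluate the coefficients $a$ and $b$, and conclude from $a<0$, $b>0$ that the bifurcation is forward. Your observation that $v_1=0$ together with the linearity of $f_3$ and $f_4$ reduces the computation of $a$ to the $f_2$-contributions alone is a clean shortcut that the paper does not spell out; the paper instead lists the second partials of both $f_1$ and $f_2$ and assembles an explicit (and somewhat opaque) expression for $a$ before asserting its sign.
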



\subsection{TB-only model}

The sub-model of \eqref{model:TB:HIV:Chronic:semcont}
with no HIV/AIDS disease, that is,
$I_H = A = C_H = L_{TH} = I_{TH}= R_{H} = A_T = 0$, is given by
\begin{equation}
\label{model:TB:only}
\begin{cases}
\dot{S}(t) = \Lambda - \lambda_T(t) S(t) - \mu S(t),\\[0.2 cm]
\dot{L}_T(t) = \lambda_T(t) S(t) + \beta^{'}_1 \lambda_T(t) R(t) - (k_1 + \tau_1 + \mu)L_T(t),\\[0.2 cm]
\dot{I}_T(t) = k_1 L_T(t) - (\tau_2 +d_T +\mu)I_T(t), \\[0.2 cm]
\dot{R}(t) = \tau_1 L_T(t) + \tau_2 I_T(t) - (\beta^{'}_1 \lambda_T(t) + \mu) R(t),
\end{cases}
\end{equation}
where
\begin{equation*}
\lambda_T(t) = \frac{\beta_1 I_T(t)}{N(t)}
\end{equation*}
and
\begin{equation*}
N(t) = S(t) + L_T(t) + I_T(t) + R(t).
\end{equation*}
The sub-model \eqref{model:TB:only} was proposed and analyzed in \cite{Castillo_Chavez_1997}.
This model incorporates the basic properties of TB transmission and dynamics.
The basic reproduction number $R_2$ of \eqref{model:TB:only} is given by
\begin{equation}
\label{eq:brn:R2}
R_2 = \frac{\Lambda}{\mu N} \left(\frac{\beta_1}{\mu + d_T + \tau_2} \right)
\left( \frac{k_1}{\mu + k_1 + \tau_1}\right).
\end{equation}
The existence, uniqueness and local asymptotic stability of the disease-free
and endemic equilibria are proven in \cite[Theorem~1]{Castillo_Chavez_1997}.


\section{Analysis of the full model}
\label{sec:fullmodel}

We now consider the full model \eqref{model:TB:HIV:Chronic:semcont},
with the DFE given by
\begin{equation*}
\begin{split}
\Xi_0 &= \left( S^\diamond, L_T^\diamond, I_T^\diamond, R^\diamond, I_H^\diamond, A^\diamond,
C_H^\diamond, L_{TH}^\diamond, I_{TH}^\diamond, R_H^\diamond, A_T^\diamond \right)\\
&= \left( \frac{\Lambda}{\mu}, 0, 0, 0, 0, 0, 0, 0, 0, 0, 0 \right).
\end{split}
\end{equation*}
The associated matrices $F$ and $V$ (see Section~\ref{sec:DFE:HIVmodel})
are given, respectively, by
\begin{equation*}
F = \left[ \begin {array}{cc}
F_1 & F_2
\end {array} \right]
\end{equation*}
with
\begin{equation*}
F_1 = \left[ \begin {array}{cccccc}
0&0&0&0&0&0\\ \noalign{\medskip}
\lambda_T &0&{\frac{\beta_1(S + R \beta'_1)}{N}} & \beta'_1 \lambda_T &0&0\\
\noalign{\medskip} 0&0&0&0&0&0\\
\noalign{\medskip}0&0&0&0&0&0\\ \noalign{\medskip}
\lambda_H &0&0&\lambda_H&{\frac{\beta_2(S + R)}{N}}
&{\frac{\beta_2 \eta_A(S + R)}{N}} \\
\noalign{\medskip} 0&0&0&0&0&0\\
\noalign{\medskip} 0&0&0&0&0&0\\
\noalign{\medskip} 0&0&{\frac {\beta'_2 \,\beta_1 R_H}{N}} &0&0&0\\
\noalign{\medskip} 0&0&\delta \lambda_H
+{\frac{\psi\,\beta_1 I_H}{N}}&0&
{\frac{\delta \beta_2 I_T}{N}} +\psi \lambda_T
&{\frac {\delta \beta_2 \eta_A I_T}{N}}\\
\noalign{\medskip} 0&0&0&0&0&0\\
\noalign{\medskip} 0&0&0&0&0&0
\end{array} \right],
\end{equation*}
\begin{equation*}
F_2 = \left[ \begin {array}{ccccc}
0&0&0&0&0\\ \noalign{\medskip}
0&0&{\frac{\beta_1 (S + R \beta'_1)}{N}}
&0&{\frac{\beta_1(S+ R \beta'_1)}{N}}
\\ \noalign{\medskip}
0&0&0&0&0\\ \noalign{\medskip}
0&0&0&0&0\\ \noalign{\medskip}
{\frac{\beta_2 \eta_C(S + R)}{N}}&{\frac {\beta_2(S+R)}{N}}
&{\frac {\beta_2(S+R)}{N}}&{\frac {\beta_2(S+R)}{N}}
&{\frac{\beta_2 \eta_A (S+R)}{N}}\\ \noalign{\medskip}
0&0&0&0&0\\ \noalign{\medskip}
0&0&0&0&0\\ \noalign{\medskip}
0&{\frac{\beta'_2 \,\beta_1 R_H}{N}} &\beta'_2 \lambda_T
&{\frac{\beta'_2 \,\beta_1 R_H}{N}}&0\\ \noalign{\medskip}
{\frac{\delta \beta_2 I_T}{N}}&{\frac {\delta \beta_2 I_T}{N}}
+{\frac {\psi \beta_1 I_H}{N}}&\frac{\delta \beta_2 I_T}{N}
&{\frac{\delta \beta_2 \eta_A I_T}{N}}+{\frac {\psi \beta_1 I_H}{N}}&0\\
\noalign{\medskip} 0&0&0&0&0\\
\noalign{\medskip} 0&0&0&0&0
\end{array} \right],
\end{equation*}
and
$V = \left[ V_1 \quad V_2\right]$ with
\begin{equation*}
V_1 =  \left[ \begin {array}{cccccc}
\lambda_T + \lambda_H +\mu&0& {\frac{\beta_1 S}{N}}&0&{\frac{\beta_2 S}{N}}
&{\frac{\beta_2 S \eta_A}{N}}\\ \noalign{\medskip}
0& C_4 &0&0&0&0\\ \noalign{\medskip}
0&-k_1 &\delta \lambda_H + C_5 &0&
{\frac{\delta \beta_2 I_T}{N}}&{\frac{\delta \beta_2 \eta_A I_T}{N}}\\
\noalign{\medskip} 0&-\tau_1 & \frac{\beta'_1 \beta_1 R-\tau_2 N}{N}&\beta'_1 \lambda_T
+ \lambda_H +\mu&{\frac{\beta_2 R}{N}}&{\frac{\beta_2 \eta_A R}{N}}\\
\noalign{\medskip} 0&0&{\frac {\psi \beta_1 I_H}{N}}&0&\psi \lambda_T + C_1 &-\alpha_1\\
\noalign{\medskip} 0&0&0&0&-\rho_1 & C_2\\
\noalign{\medskip} 0&0&0&0&-\phi&0\\
\noalign{\medskip} 0&0&0&0&0&0\\
\noalign{\medskip} 0&0&0&0&0&0\\
\noalign{\medskip} 0&0&{\frac{\beta'_2 \beta_1 R_H}{N}}&0&0&0\\
\noalign{\medskip} 0&0&0&0&0&0
\end {array} \right]
\end{equation*}
and
\begin{equation*}
V_2 =  \left[ \begin {array}{ccccc}
{\frac{\beta_2 \eta_C S}{N}}&{\frac{\beta_2 S}{N}} &
{\frac{\left(\beta_1 + \beta_2\right)S}{N}}&{\frac{\beta_2 S}{N}}&
{\frac{\left(\beta_1 + \beta_2 \eta_A\right)S}{N}} \\ \noalign{\medskip}
0&0&0&0&0\\ \noalign{\medskip}
{\frac {\delta \beta_2 \eta_C I_T}{N}} &{\frac{\delta \beta_2 I_T}{N}}
&{\frac {\delta \beta_2 I_T}{N}}&{\frac{\delta \beta_2 I_T}{N}}&
{\frac {\delta \beta_2 \eta_A I_T}{N}}\\ \noalign{\medskip}
\frac{\beta_2 \eta_C R}{N}&{\frac{\beta_2 R}{N}}&
{\frac{\left(\beta'_1 \beta_1+\beta_2\right)R}{N}}&
{\frac{\beta_2 R}{N}}& \frac{\left(\beta'_1 \beta_1 + \beta_2 \eta_A\right)R}{N} \\
\noalign{\medskip} -\omega_1 &0&{\frac{\psi \beta_1 I_H}{N}}&0&
\frac{\psi \beta_1 I_H}{N}\\ \noalign{\medskip}
0&{\frac{\beta_2 R}{N}}&0&
-\omega_2 &0\\ \noalign{\medskip}
\omega_1 +\mu&-r \, \tau_3 &-p \, \rho_2 &0&0\\
\noalign{\medskip} 0& C_6 &0&0&0\\
\noalign{\medskip} 0&-k_2 & C_7 &0&-\alpha_2\\
\noalign{\medskip} 0&\left( -1+r \right) \tau_3 &-q \,
\rho_2 + {\frac{\beta'_2 \beta_1 R_H}{N}}&
\beta'_2 \lambda_T +\omega_2 +\mu&{\frac{\beta'_2 \beta_1 R_H}{N}}\\
\noalign{\medskip} 0&0&\left(-1+p+q \right) \rho_2 &0& C_8
\end {array} \right],
\end{equation*}
where
$C_4 = k_1 + \tau_1 + \mu$, $C_5 = \tau_2 + \mu + d_T$,
$C_6 = k_2 + \tau_3 + \mu$, $C_7 = \rho_2 + \mu + d_T$
and $C_8 = \alpha_2 + d_{TA} + \mu$.
The dominant eigenvalues of the matrix $F  V^{-1}$ are
\begin{equation*}
R_1 = \frac {{\beta_2}\,\Lambda\, \left( C_3(C_2 +{\eta_A}\,{\rho_1})
+{\eta_C}\,\phi C_2 \right) }
{N \mu \left[ \mu \left(C_3(\rho_1 + C_2) + C_2 \phi
+ \rho_1 d_A \right) + \rho_1 \omega_1 d_A  \right] },
\quad R_2= \frac{\Lambda \beta_1 k_1}{\mu N C_5 C_4}.
\end{equation*}
Thus, the basic reproduction number $R_0$
of the model \eqref{model:TB:HIV:Chronic:comcontrolos} is given by
$$
R_0 = \max \{ R_1, R_2 \}.
$$

Using the same procedure as in Section~\ref{sec:DFE:HIVmodel}, the following result holds.

\begin{lemma}
The DFE of the full HIV-TB model \eqref{model:TB:HIV:Chronic:semcont},
given by $\Xi_0$, is locally asymptotically stable
if $R_0 < 1$, and unstable if $R_0 > 1$.
\end{lemma}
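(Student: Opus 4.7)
The plan is to invoke the next-generation matrix machinery of van den Driessche and Watmough, exactly as was done for the HIV-only submodel in Section~\ref{sec:DFE:HIVmodel}, but now applied to the eleven-compartment system \eqref{model:TB:HIV:Chronic:semcont}. Theorem~2 of \cite{van:den:Driessche:2002} says precisely that, once the decomposition of the vector field into ``new infection'' and ``transition'' parts has been carried out so that the standard hypotheses (nonnegativity of $\mathcal{F}$ and $\mathcal{V}^-$, the $M$-matrix structure of $V$, local stability of the disease-free subsystem, etc.) hold, the DFE is locally asymptotically stable when $\rho(FV^{-1})<1$ and unstable when $\rho(FV^{-1})>1$. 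Thus the proof reduces to (i) checking those hypotheses for our $F$ and $V$, and (ii) identifying $\rho(FV^{-1})$ with $R_{0}=\max\{R_{1},R_{2}\}$.

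First, I would verify the hypotheses. The infected compartments here are $L_T,I_T,R,I_H,A,C_H,L_{TH},I_{TH},R_H,A_T$ (ten in number), and the only uninfected compartment relevant to the DFE is $S$. The infection rates $\lambda_T S$, $\beta'_1\lambda_T R$, $\lambda_H S$, $\lambda_H R$, $\delta\lambda_H I_T$, $\psi\lambda_T I_H$, $\beta'_2\lambda_T R_H$ are all nonnegative on $\Omega$, as is the outflow matrix, and the block $V$ restricted to the infected subsystem is a nonsingular $M$-matrix because its off-diagonal entries are nonpositive and its column sums, which equal $\mu$ plus disease-induced death rates, are strictly positive. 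Local stability of the disease-free subsystem $\dot S = \Lambda-\mu S$ at $S^{\diamond}=\Lambda/\mu$ is immediate. Hence Theorem~2 of \cite{van:den:Driessche:2002} applies.

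Second, I would read off the spectral radius of $FV^{-1}$ from the block structure already computed by the authors. Because the flows to the pure-TB infected classes $(L_T,I_T,R)$ depend only on those classes at the DFE (since $I_H,A,C_H,L_{TH},I_{TH},R_H,A_T$ vanish there), and similarly the flows to the pure-HIV classes $(I_H,A,C_H)$ decouple from the TB-only classes at the DFE, the matrix $FV^{-1}$ evaluated at $\Xi_0$ is block triangular with the TB-only and HIV-only next-generation blocks on the diagonal and the coinfection block giving the third diagonal entry, whose spectral radius does not exceed $\max\{R_1,R_2\}$ because the coinfection classes $L_{TH},I_{TH},R_H,A_T$ are empty at the DFE and thus contribute only zero eigenvalues (products of vanishing entries). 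Consequently the nonzero eigenvalues of $FV^{-1}$ are exactly those arising from the HIV-only and TB-only submodels, yielding $\rho(FV^{-1})=\max\{R_1,R_2\}=R_0$.

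The main obstacle is the bookkeeping in the second step: confirming that the apparent coinfection contributions in $F_2$ and $V_2$ evaluate to zero at $\Xi_0$ and hence that the spectral radius really decouples into the maximum of the two subsystem reproduction numbers. Once this block-triangular structure is justified, the conclusion of the lemma follows verbatim from \cite[Theorem~2]{van:den:Driessche:2002}: $R_0<1$ forces all eigenvalues of the Jacobian at $\Xi_0$ to have negative real part, so $\Xi_0$ is locally asymptotically stable, while $R_0>1$ produces an eigenvalue with positive real part and hence instability.
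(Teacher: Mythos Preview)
Your proposal is correct and follows exactly the same approach as the paper, which simply states ``Using the same procedure as in Section~\ref{sec:DFE:HIVmodel}'' and invokes Theorem~2 of \cite{van:den:Driessche:2002} together with the block structure discussed in Section~4.4 of that reference. In fact you supply considerably more detail than the paper does, in particular the explicit justification that the coinfection entries of $F$ and $V$ vanish at $\Xi_0$ so that $FV^{-1}$ is block triangular with the TB-only and HIV-only next-generation blocks on the diagonal, forcing $\rho(FV^{-1})=\max\{R_1,R_2\}$.
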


\begin{remark}
There are different ways to compute the basic reproduction number $R_0$. Here we are computing it using
one of the most well-known methods: $R_0$ is the dominant eigenvalue of the associated
next-generation matrix $F V^{-1}$ (see Remark~\ref{rem3}). A justification for the value
of the basic reproduction number $R_0$ to be $\max \{R_1, R_2 \}$
is given in Section~4.4 of \cite{van:den:Driessche:2002}.
\end{remark}


\section{Optimal control problem}
\label{sec:optimal:control}

In this section we present an optimal control problem, describing our goal
and the restrictions of the epidemic. In the model without controls discussed so far,
we have $p$ representing the \emph{fraction of $I_{TH}$ individuals that take HIV and TB treatment}
and $q$ representing the \emph{fraction of $I_{TH}$ individuals that take TB treatment only}.
Roughly speaking, the problem of optimal control consists
to determine the optimal combination for the values of $p$ and $q$. For this reason,
we take $p$ as the control $u_1$ and $q$ as the control $u_2$. Precisely, we add to the model
\eqref{model:TB:HIV:Chronic:semcont} the two control functions
$u_1(\cdot)$ and $u_2(\cdot)$ in the following way:
\begin{equation}
\label{model:TB:HIV:Chronic:comcontrolos}
\begin{cases}
\dot{S}(t) = \Lambda - \lambda_T(t) S(t) - \lambda_H(t) S(t) - \mu S(t),\\[0.2 cm]
\dot{L}_T(t) = \lambda_T(t) S(t) + \beta^{'}_1 \lambda_T(t) R(t)
- \left(k_1 + \tau_1 + \mu\right) L_T(t),\\[0.2 cm]
\dot{I}_T(t) = k_1 L_T(t) - \left(\tau_2 +d_T +\mu + \delta \lambda_H(t)\right)I_T(t), \\[0.2 cm]
\dot{R}(t) = \tau_1 L_T(t) + \tau_2 I_T(t) - (\beta^{'}_1 \lambda_T(t)
+ \lambda_H(t) + \mu) R(t),\\[0.2 cm]
\dot{I}_H(t) = \lambda_H(t) S(t) - (\rho_1  + \phi + \psi \lambda_T(t) + \mu)I_H(t)
+ \alpha_1 A(t) + \lambda_H(t) R(t) + \omega_1 C_H(t), \\[0.2 cm]
\dot{A}(t) =  \rho_1 I_H(t) + \omega_2 R_H(t)
- \alpha_1 A(t) - (\mu + d_A) A(t),\\[0.2 cm]
\dot{C}_H(t) = \phi I_H(t) + u_1(t) \, \rho_2 I_{TH}(t)
+ r\,  \tau_3 L_{TH}(t) - (\omega_1 + \mu)C_H(t),\\[0.2 cm]
\dot{L}_{TH}(t) = \beta^{'}_2 \lambda_T(t) R_{H}(t)
- \left(k_2 + \tau_3 + \mu\right) L_{TH}(t),\\[0.2 cm]
\dot{I}_{TH}(t) = \delta \lambda_H(t) I_T(t) + \psi \lambda_T(t) I_H(t)
+ \alpha_2 A_T(t)+ k_2 L_{TH}(t)
- \left( \rho_2 + \mu + d_T \right)I_{TH}(t),\\[0.2 cm]
\dot{R}_{H}(t) = u_2(t) \rho_2 I_{TH}(t) + (1-r)\, \tau_3 L_{TH}(t)
- \left(\beta^{'}_2 \lambda_T(t) + \omega_2 + \mu\right)R_{H}(t), \\[0.2 cm]
\dot{A}_T(t) = \left(1-(u_1(t)+u_2(t))\right)\rho_2 I_{TH}(t)
-\left(\alpha_2 + \mu + d_{TA}\right)A_T(t).
\end{cases}
\end{equation}
As already mentioned, the controls $u_1$ and $u_2$ represent the fraction of $I_{TH}$ individuals that are treated
for TB and HIV (simultaneously) and treated for TB only, respectively. If we consider fixed values
for $u_1$ and $u_2$ in \eqref{model:TB:HIV:Chronic:comcontrolos}, then we get the model
\eqref{model:TB:HIV:Chronic:semcont} with $u_1 = p$ and $u_2 = q$.
The aim is to find the optimal values $u_1^*$ and $u_2^*$ of the controls
$u_1$ and $u_2$, such that the associated state trajectories
$S^*, L_T^*, I_T^*, R^*, I_H^*, A^*, C_H^*, L_{TH}^*, I_{TH}^*, R_H^*, A_T^*$,
solution of the system \eqref{model:TB:HIV:Chronic:comcontrolos}
in the time interval $[0, T]$ with initial conditions
$S^*(0)$, $L_T^*(0)$, $I_T^*(0)$, $R^*(0)$, $I_H^*(0)$,
$A^*(0)$, $C_H^*(0)$, $L_{TH}^*(0)$, $I_{TH}^*(0)$, $R_H^*(0)$, $A_T^*(0)$,
minimize the objective functional.
Here the objective functional considers the number of HIV-infected individuals
with AIDS symptoms co-infected with TB $A_T$, and the implementation cost
of the strategies associated to the controls $u_i$, $i=1, 2$.
The controls are bounded between $0$ and $0.95$.
We assume that $p$ and $q$ cannot take values greater than $0.95$
because we assume that there are some budgetary constraints or some resistance
from patients in making the treatments (treatment for HIV and TB together or just the treatment for TB).
In other words, we assume that one cannot treat all the people for both diseases
or even just for tuberculosis. This is more than reasonable from biological side.
Moreover, the sum of $p + q$ is also taken as bounded by 0.95. This is related with the formulation of the model.
Indeed, note that $1- (p + q)$ is the fraction of $I_{TH}$ individuals who are not treated for TB and HIV simultaneously
and are also not treated for TB alone. For this reason, what we assume is that this fraction of individuals
takes at least the value of 5\%. This is in agreement with available medical data.
Precisely, we consider the state system \eqref{model:TB:HIV:Chronic:comcontrolos}
of ordinary differential equations in $\mathbb{R}^{11}$
with the set of admissible control functions given by
\begin{multline}
\label{eq:adm:controls}
\Theta = \biggl\{ (u_1(\cdot), u_2(\cdot)) \in (L^{\infty}(0, T))^2 \,
| \,  0 \leq u_1 (t), u_2(t) \leq 0.95 \, \\
\text{ and } \, 0 \leq u_1 (t)
+ u_2(t) \leq 0.95 ,  \, \forall \, t \in [0, T] \, \biggr\} .
\end{multline}
The objective functional is given by
\begin{equation}
\label{costfunction}
J(u_1(\cdot), u_2(\cdot)) = \int_0^{T} \left[ A_T(t)
+ \frac{W_1}{2}u_1^2(t) + \frac{W_2}{2}u_2^2(t) \right] dt,
\end{equation}
where the constants $W_1$ and $W_2$ are a measure
of the relative cost of the interventions
associated to the controls $u_1$ and $u_2$, respectively.

\begin{remark}
Epidemiologically, our cost functional tells us that we want to minimize the number
of HIV-infected individuals with AIDS symptoms co-infected with active TB.
For that, one applies control measures that are associated with some implementation
costs that we also intend to minimize. Other cost functionals may be used as well.
Here, by considering the cost with controls in a quadratic form,
we are being consistent with previous works in the literature
(see, e.g., \cite{MR3266821,Silva:Torres:TBOC:MBS:2013}).
Moreover, a quadratic structure in the control has
mathematical advantages: if the control set is a compact and convex polyhedron (as
it is the case here), it imply that the Hamiltonian attains its minimum over
the control set at a unique point. For future work we plan
to compare the results now obtained, for a cost with a quadratic
form in the controls, with those of a linear cost in the controls.
\end{remark}

In order to simplify the formulation of the optimal control problem,
let $f_i$ represent the right-hand side of the $i$th equation of system
\eqref{model:TB:HIV:Chronic:comcontrolos}, $x_i$ be the state variable
whose derivative is given by the $i$th component of $F$,
$F = \left(f_1, \ldots, f_{11}\right)$, and
$X = \left(x_1, \ldots, x_{11}\right)$, $i = 1, \ldots, 11$.
We consider the optimal control problem of determining
$X^*(\cdot)$ associated to an admissible control pair
$\left(u_1^*(\cdot), u_2^*(\cdot) \right) \in \Theta$
on the time interval $[0, T]$, satisfying
\eqref{model:TB:HIV:Chronic:comcontrolos},
the initial conditions $X(0)$ and
minimizing the cost function \eqref{costfunction}, that is,
\begin{equation}
\label{mincostfunct}
J(u_1^*(\cdot), u_2^*(\cdot))
= \min_{\Theta} J(u_1(\cdot), u_2(\cdot)).
\end{equation}
The existence of optimal controls $\left(u_1^*(\cdot), u_2^*(\cdot)\right)$
comes from the convexity of the cost functional \eqref{costfunction}
with respect to the controls and the regularity of the system
\eqref{model:TB:HIV:Chronic:comcontrolos} (see, \textrm{e.g.},
\cite{Cesari_1983,Fleming_Rishel_1975}
for existence results of optimal solutions).
According to the Pontryagin maximum principle \cite{Pontryagin_et_all_1962},
if $\left(u_1^*(\cdot), u_2^*(\cdot)\right) \in \Theta$ is optimal for the problem
\eqref{model:TB:HIV:Chronic:comcontrolos}, \eqref{mincostfunct} with the initial conditions
$X(0)$ and fixed final time $T$, then there exists
a nontrivial absolutely continuous mapping $\lambda : [0, T] \to \mathbb{R}^{11}$,
$\lambda(t) = \left(\lambda_1(t), \ldots, \lambda_{11}(t)\right)$,
called \emph{adjoint vector}, such that
\begin{equation}
\label{adjsystemPMP}
\dot{x}_i = \frac{\partial H}{\partial \lambda_i}(X,\lambda,u_1, u_2),
\quad \dot{\lambda}_i = -\frac{\partial H}{\partial x_i}(X,\lambda,u_1, u_2),
\quad i = 1, \ldots, 11,
\end{equation}
where the function $H= H(X,\lambda,u_1, u_2)$ defined by
\begin{equation*}
H= A_T  + \frac{W_1}{2}u_1^2 + \frac{W_2}{2}u_2^2 + \langle\lambda,F(X,u_1, u_2)\rangle\\
\end{equation*}
is called the \emph{Hamiltonian}, and the minimality condition
\begin{equation}
\label{maxcondPMP}
H(X^*(t), \lambda^*(t), u_1^*(t), u_2^*(t))
= \min_{\stackrel{0 \leq u_1, u_2 \leq 0.95}{u_1 + u_2 \le 0.95}} H(X^*(t),\lambda^*(t), u_1, u_2)
\end{equation}
holds almost everywhere on $[0, T]$. Moreover, the transversality conditions
\begin{equation*}
\lambda_i(T) = 0, \quad i =1,\ldots, 11,
\end{equation*}
are also satisfied.


\section{Numerical results and discussion}
\label{sec:num:results}

In this section we present results of the numerical implementation
of extremal control strategies for the TB-HIV model
\eqref{model:TB:HIV:Chronic:comcontrolos}.
\begin{table}[!htb]
\centering
\begin{tabular}{c  c  c  c  c  c } \hline\hline
$S(0)$ & $L_T(0)$ & $I_T(0)$ & $R(0)$ & $I_H(0)$ & $A(0)$ \\[0.1cm]
$\frac{66 N(0)}{120}$ & $\frac{37 N(0)}{120}$ & $\frac{5 N(0)}{120}$
& $\frac{2 N(0)}{120}$ & $\frac{2 N(0)}{120}$ & $\frac{N(0)}{120}$ \\[0.2cm] \hline
$C_H(0)$ & $L_{TH}(0)$ & $I_{TH}(0)$ & $R_H(0)$ & $A_T(0)$ & \\[0.1cm]
$\frac{N(0)}{120}$ & $\frac{2 N(0)}{120}$ & $\frac{2 N(0)}{120}$
& $\frac{N(0)}{120}$ & $\frac{N(0)}{120}$ & \\[0.2cm] \hline \hline
\end{tabular}
\caption{Initial conditions of the TB-HIV/AIDS model, where $N(0) = 30000$.}
\label{table:init:cond}
\end{table}
First we solve numerically the optimal control problem \eqref{model:TB:HIV:Chronic:comcontrolos},
\eqref{mincostfunct} with initial conditions given in Table~\ref{table:init:cond}, and fixed final
time $T = 50$ years. The initial conditions were estimated as follows.
We assume that more than half of population ($55\%$) belongs to
the subgroup of susceptible and that a big percentage ($\simeq 31\%$) is infected with TB but is in the latent stage.
This is justified from the fact that ``about one-third of the world's population has latent TB'', as one can find
in the website of the World Health Organization (WHO) \cite{fs:who}.
The value for the fraction of people infected with HIV is assumed $\simeq 1.7\%$,
based on HIV \& AIDS Information from AVERT.org \cite{Worldwide:HIV:AIDS:Stat}:
``\emph{There is either a generalised or concentrated epidemic.
In a generalised epidemic, HIV prevalence is 1\% or more in the general population.
In a concentrated or low level epidemics, HIV prevalence is below 1\% in the general population
but exceeds 5\% in specific at-risk populations like injecting drug users or sex workers,
or HIV prevalence is not recorded at a significant level in any group.}''
The remaining values are estimated by assuming that we are in a ``controlled'' situation,
without large percentages in the groups of highest risk such as $A$, $A_T$ and $C_H$.
Our aim is to find the optimal combination of the fraction of individuals
$I_{TH}$ that take correctly HIV and TB treatment ($u_1^*$) or take only TB treatment ($u_2^*$),
in order to minimize the number of individuals with AIDS and TB diseases $A_T$.
Different approaches were used to obtain and confirm the numerical results. One approach
consisted in using IPOPT (short for ``Interior Point OPTimizer'', a software library
for large scale nonlinear optimization of continuous systems) \cite{MR2195616}
and the algebraic modeling language AMPL (acronym for ``A Mathematical Programming Language'')
\cite{AMPL}. A second approach was to use the PROPT Matlab Optimal Control Software \cite{PROPT}.
For more details we refer the reader to
\cite{Silva:Torres:TBOC:NACO:2012,Silva:Torres:TBOC:MBS:2013},
where the same optimization approaches are used.
In Figure~\ref{fig:CH:ITH:AT:minAT} we compare the extremal dynamics $C_H^*$, $I_{TH}^*$ and $A_T^*$
associated to the extremal controls $u_1^*$ and $u_2^*$ with the dynamics of the model
\eqref{model:TB:HIV:Chronic:comcontrolos} with $u_1(t)=p $ and $u_2(t)=q$, which coincide
with model \eqref{model:TB:HIV:Chronic:semcont}. In this simulations we consider $\beta_1 = 0.6$,
$\beta_2 = 0.1$ and the rest of the parameters take the values of Table~\ref{table:parameters:TB-HIV:Chronic},
which corresponds to $R_0 = 4.91159$ ($R_1 = 4.91159$, $R_2 = 1.07437$).
We assume that the weight constants take the same value $W_1 = W_2 = 50$.
Observe that the number of individuals with AIDS and TB diseases $A_T$
decreases significantly when the control measures $u_1^*$, $u_2^*$ are implemented, see
Figure~\ref{fig:CH:ITH:AT:minAT} (c). On the other hand, the number of individuals that stays
in the class $C_H$ increases in opposition to the number of individuals that have both infections HIV and TB,
see Figure~\ref{fig:CH:ITH:AT:minAT} (a) and (b).
\begin{figure}[!htb]
\centering
\subfloat[\footnotesize{$C_H$}]{\label{CH:minAT}
\includegraphics[width=0.33\textwidth]{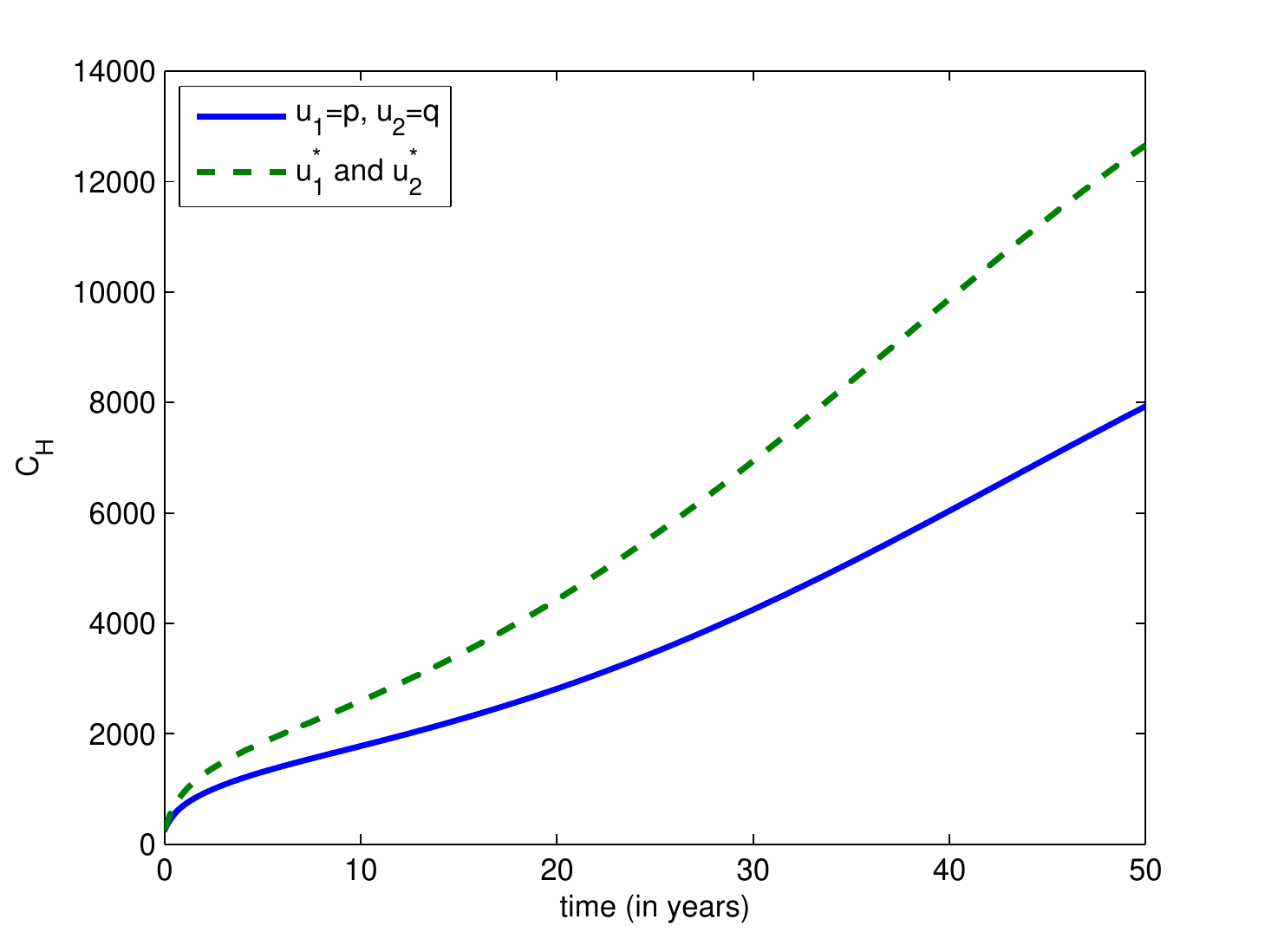}}
\subfloat[\footnotesize{$I_{TH}$}]{\label{ITH:minAT}
\includegraphics[width=0.33\textwidth]{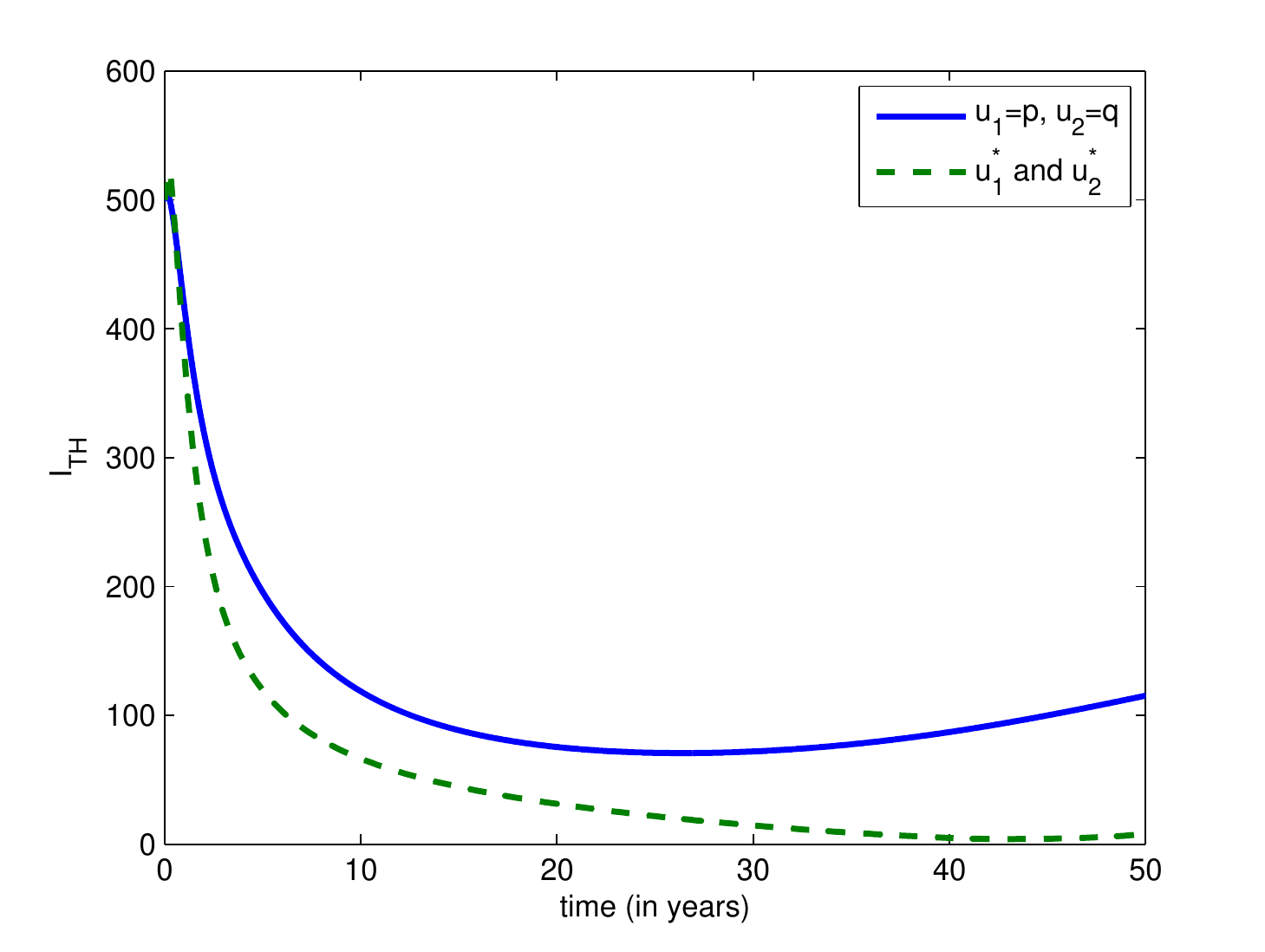}}
\subfloat[\footnotesize{$A_T$}]{\label{AT:minAT}
\includegraphics[width=0.33\textwidth]{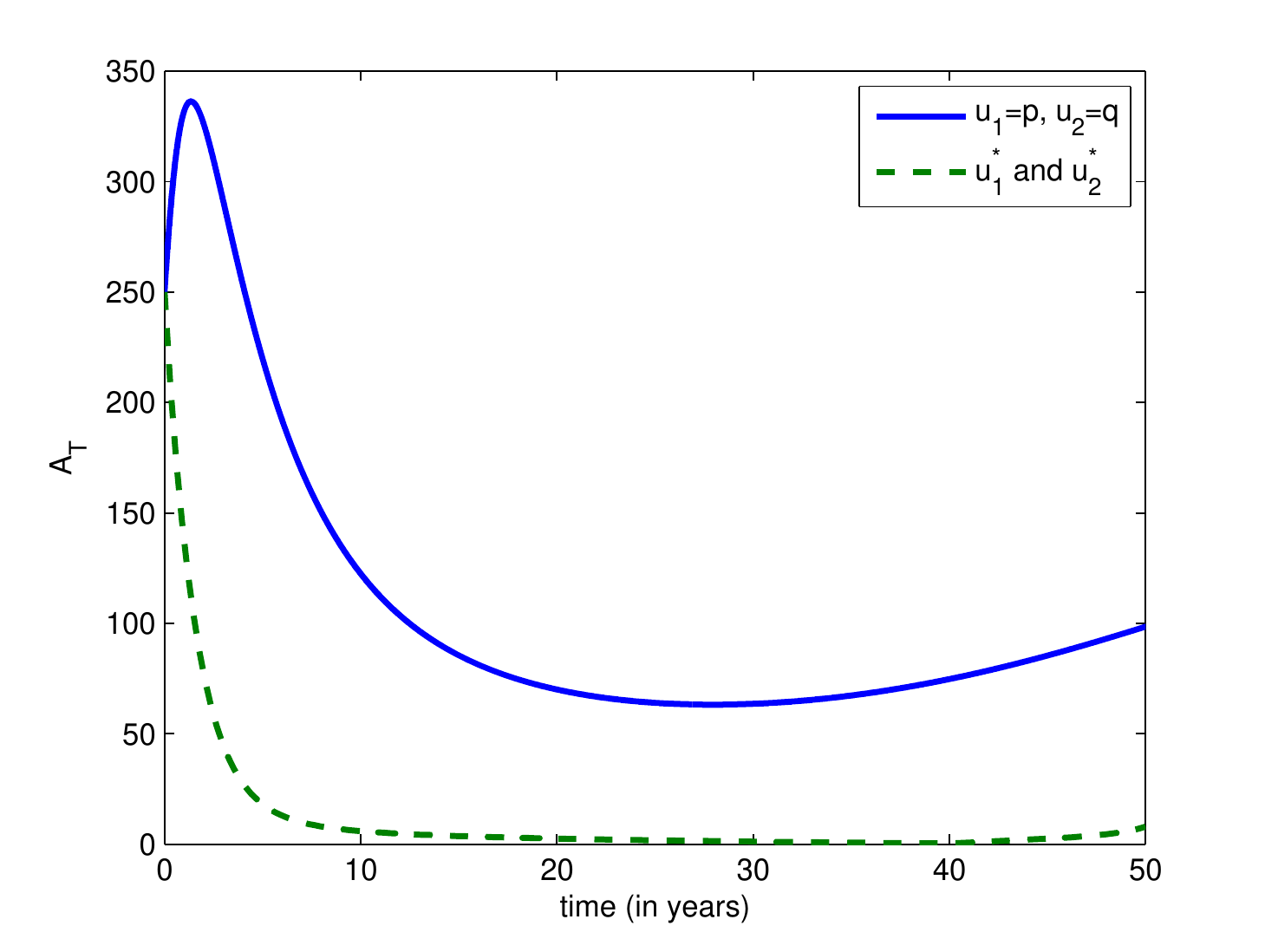}}
\caption{Dynamics $C_H$, $I_{TH}$ and $A_T$ for cost functional \eqref{costfunction},
$\beta_1 = 0.6$, $\beta_2 = 0.1$, $W_1 = W_2 = 50$ and parameter
values from Table~\ref{table:parameters:TB-HIV:Chronic}.}
\label{fig:CH:ITH:AT:minAT}
\end{figure}
During approximately 40 years the optimal combination of the fractions of individuals $I_{TH}$
that take HIV and TB treatments simultaneously and only TB treatment is around $0.5$ and $0.46$,
respectively, see Figure~\ref{fig:u1:u2:minAT} (a) and (b). In Figure~\ref{fig:u1:u2:minAT} (c)
we observe that the extremal controls satisfy the restriction \eqref{eq:adm:controls}.
\begin{figure}[!htb]
\centering
\subfloat[\footnotesize{$u_1^*$}]{\label{u1:minAT}
\includegraphics[width=0.33\textwidth]{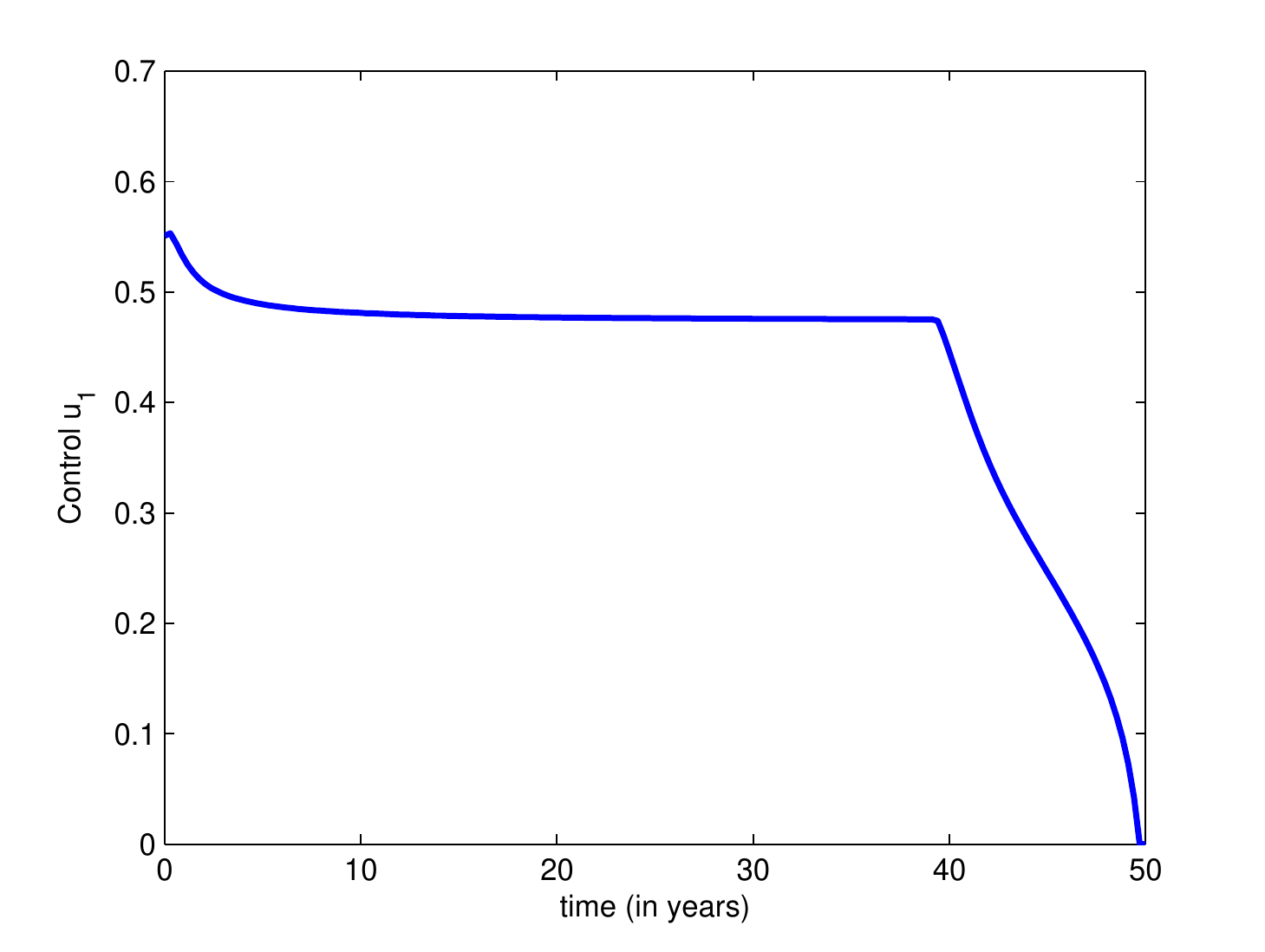}}
\subfloat[\footnotesize{$u_2^*$}]{\label{u2:minATa}
\includegraphics[width=0.33\textwidth]{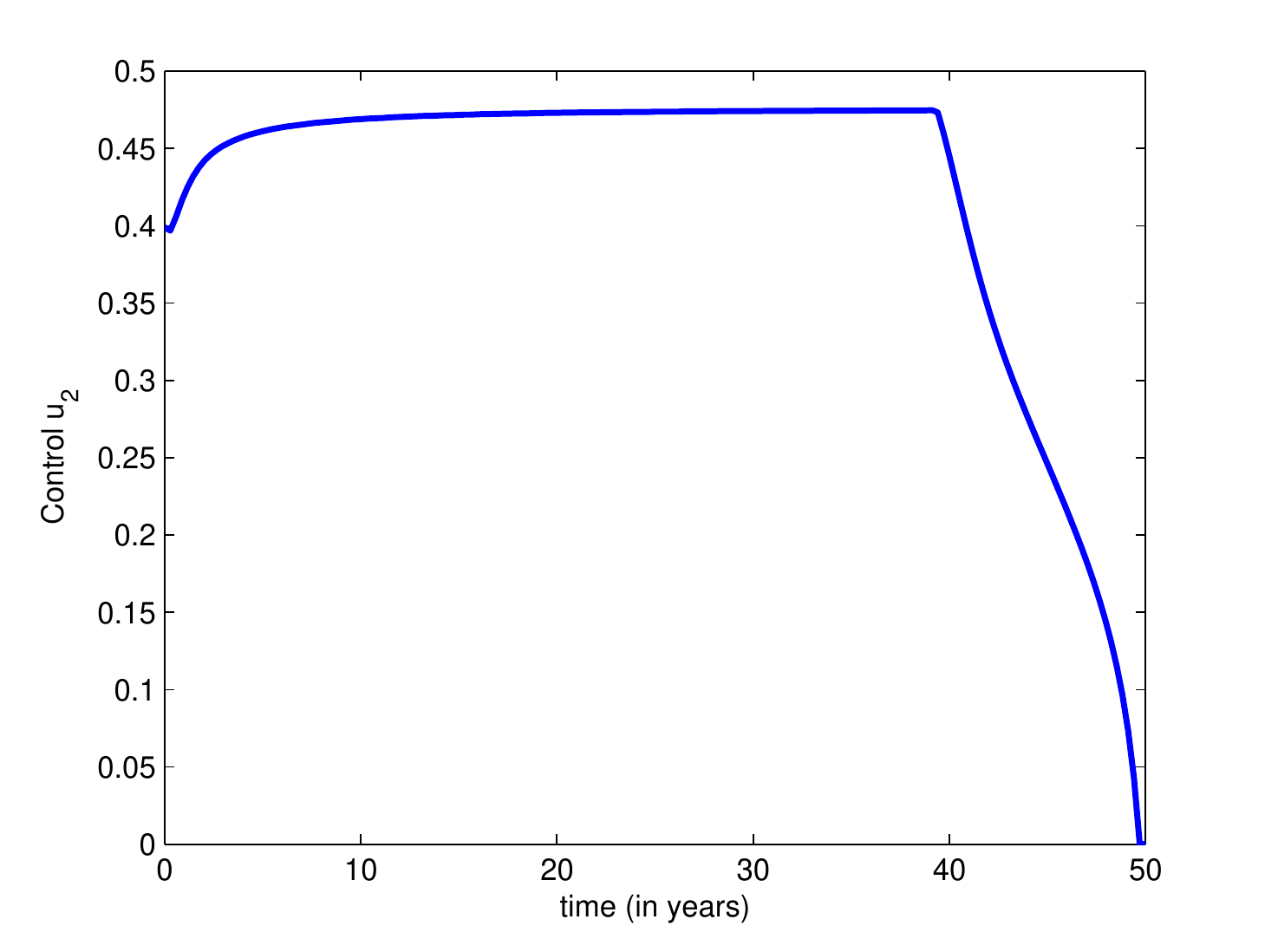}}
\subfloat[\footnotesize{$u_1^*+u_2^*$}]{\label{u2:minATb}
\includegraphics[width=0.33\textwidth]{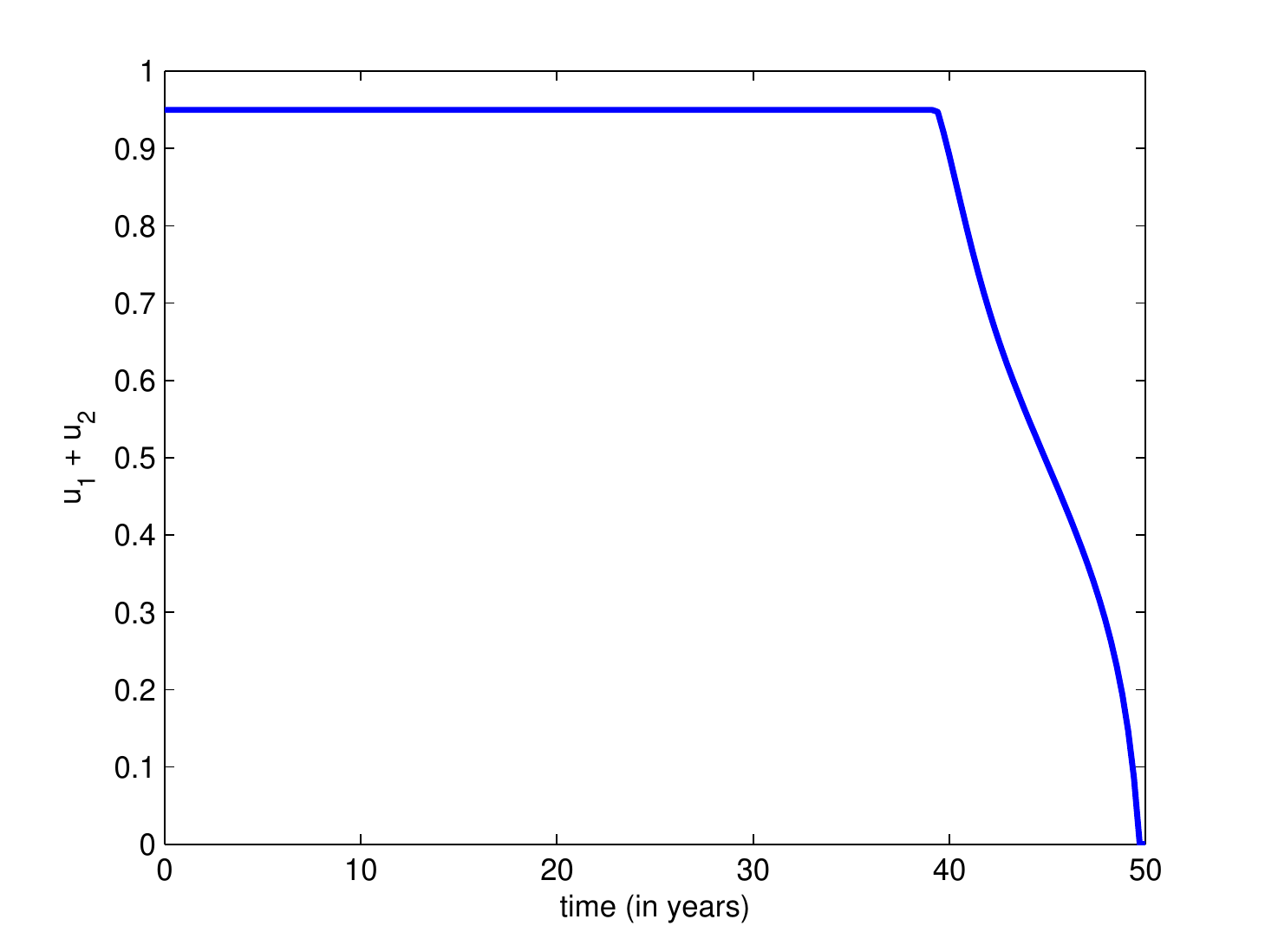}}
\caption{Extremal controls $u_1^*$ and $u_2^*$ for cost functional \eqref{costfunction},
$\beta_1 = 0.6$, $\beta_2 = 0.1$, $W_1 = W_2 = 50$ and parameter values
from Table~\ref{table:parameters:TB-HIV:Chronic}.}
\label{fig:u1:u2:minAT}
\end{figure}
\begin{figure}[!htb]
\centering
\includegraphics[width=0.33\textwidth]{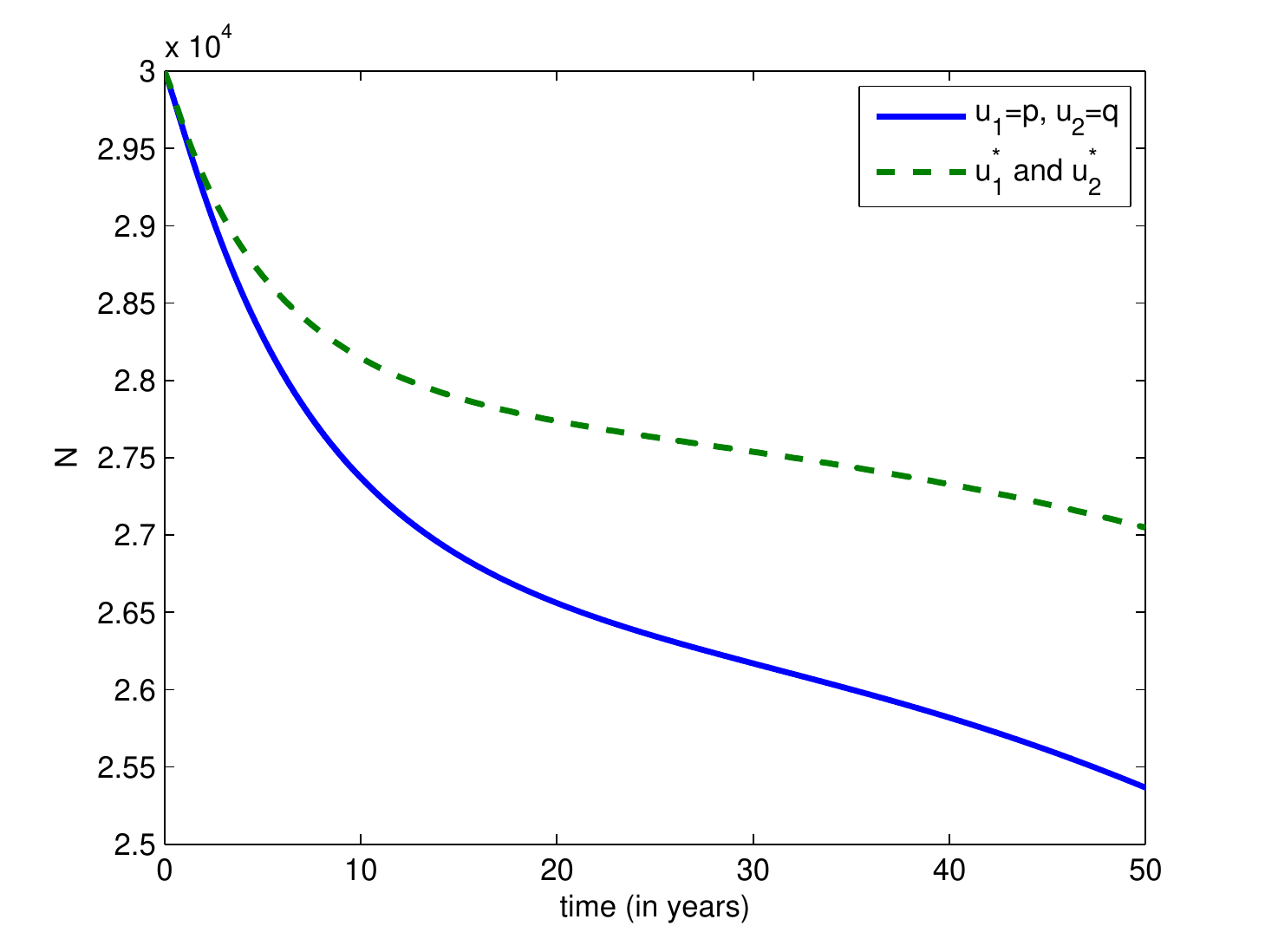}
\caption{Total population $N$ for cost functional \eqref{costfunction},
$\beta_1 = 0.6$, $\beta_2 = 0.1$, $W_1 = W_2 = 50$ and parameter values
from Table~\ref{table:parameters:TB-HIV:Chronic}.}
\label{N:minAT}
\end{figure}
At the end of 50 years, the number of TB and AIDS induced deaths reduces 5\% when the controls
$u_1^*$, $u_2^*$ are applied, see Figure~\ref{N:minAT}.
Since the HIV treatments have higher costs than TB treatment, we can consider that the weight
constant $W_1$ associated to the control $u_1$ takes greater values than $W_2$. In this case,
the fraction of individuals that take TB and HIV treatment $u_1^*$ decreases and the fraction
of individuals that take only TB treatment increases, compared to the previous case $W_1 = W_2 = 50$,
but the associated extremal dynamics $C_H^*$, $I_H^*$ and $A_T^*$ behave similarly to the ones
in the case $W_1 = W_2 = 50$, see Figure~\ref{fig:u1:u2:minAT:W1:500}.
\begin{figure}[!htb]
\centering
\subfloat[\footnotesize{$u_1^*$}]{\label{u1:minAT:W1:500}
\includegraphics[width=0.33\textwidth]{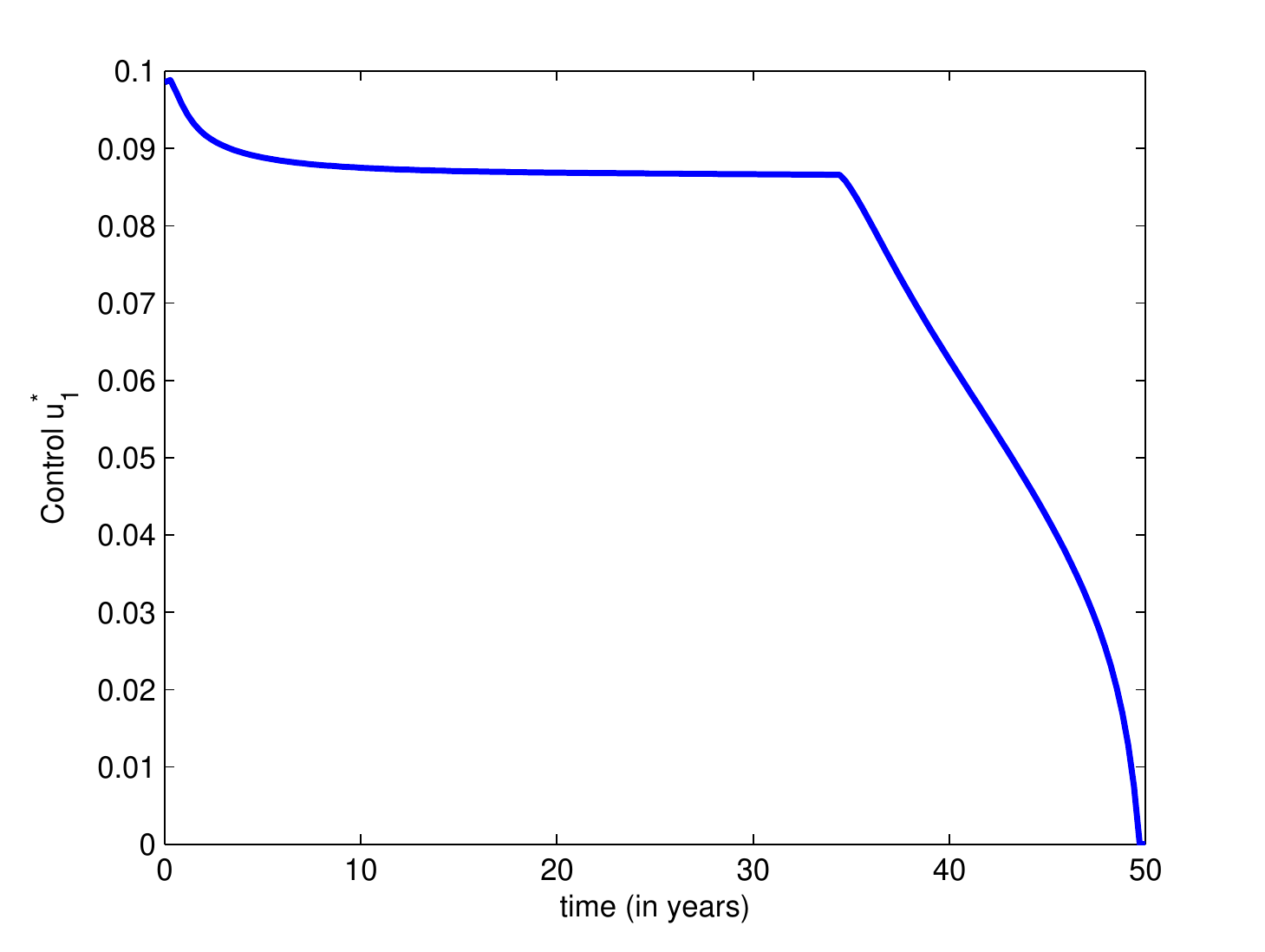}}
\subfloat[\footnotesize{$u_2^*$}]{\label{u2:minAT:W1:500a}
\includegraphics[width=0.33\textwidth]{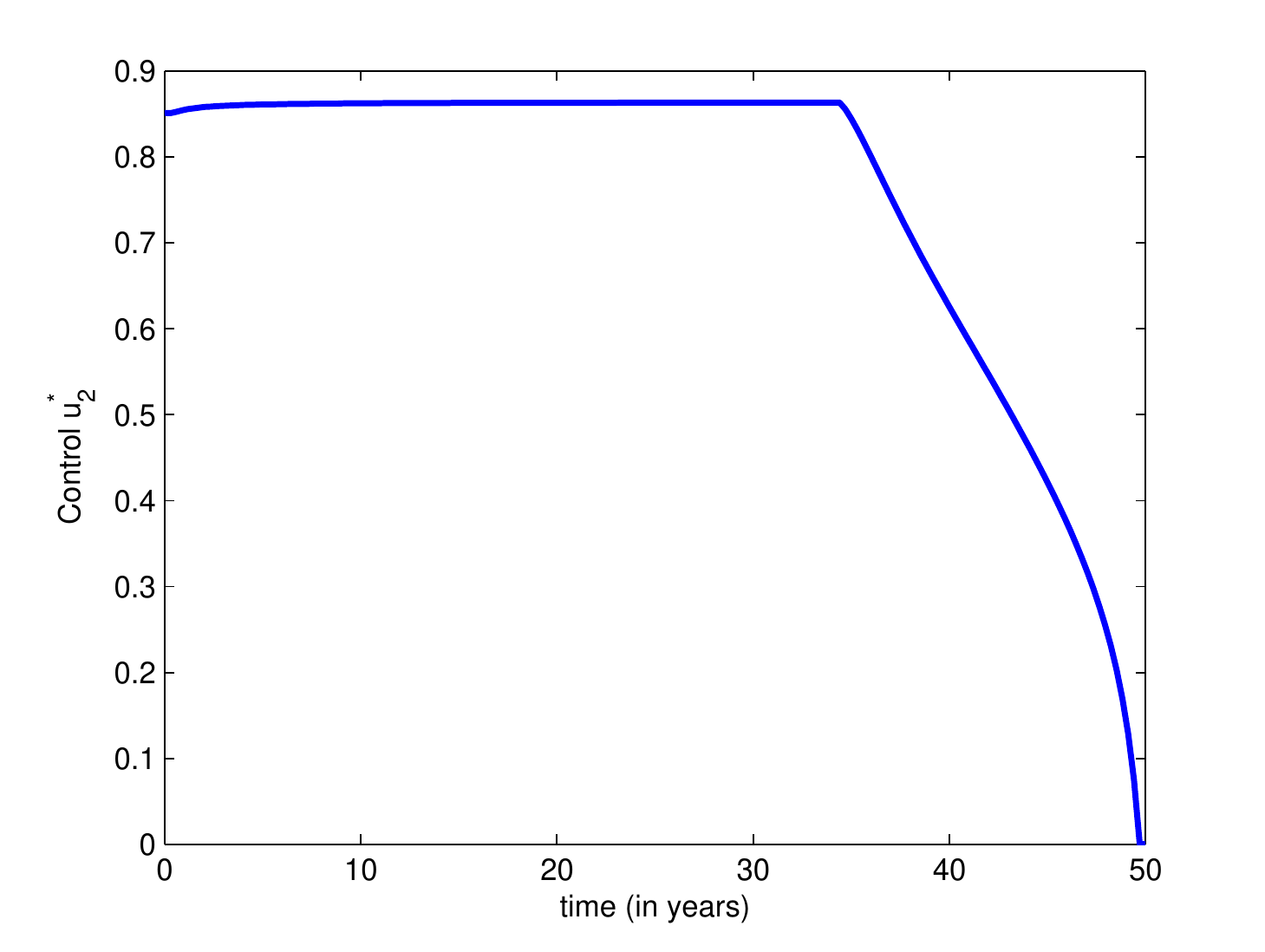}}
\subfloat[\footnotesize{$u_1^*+u_2^*$}]{\label{u2:minAT:W1:500b}
\includegraphics[width=0.33\textwidth]{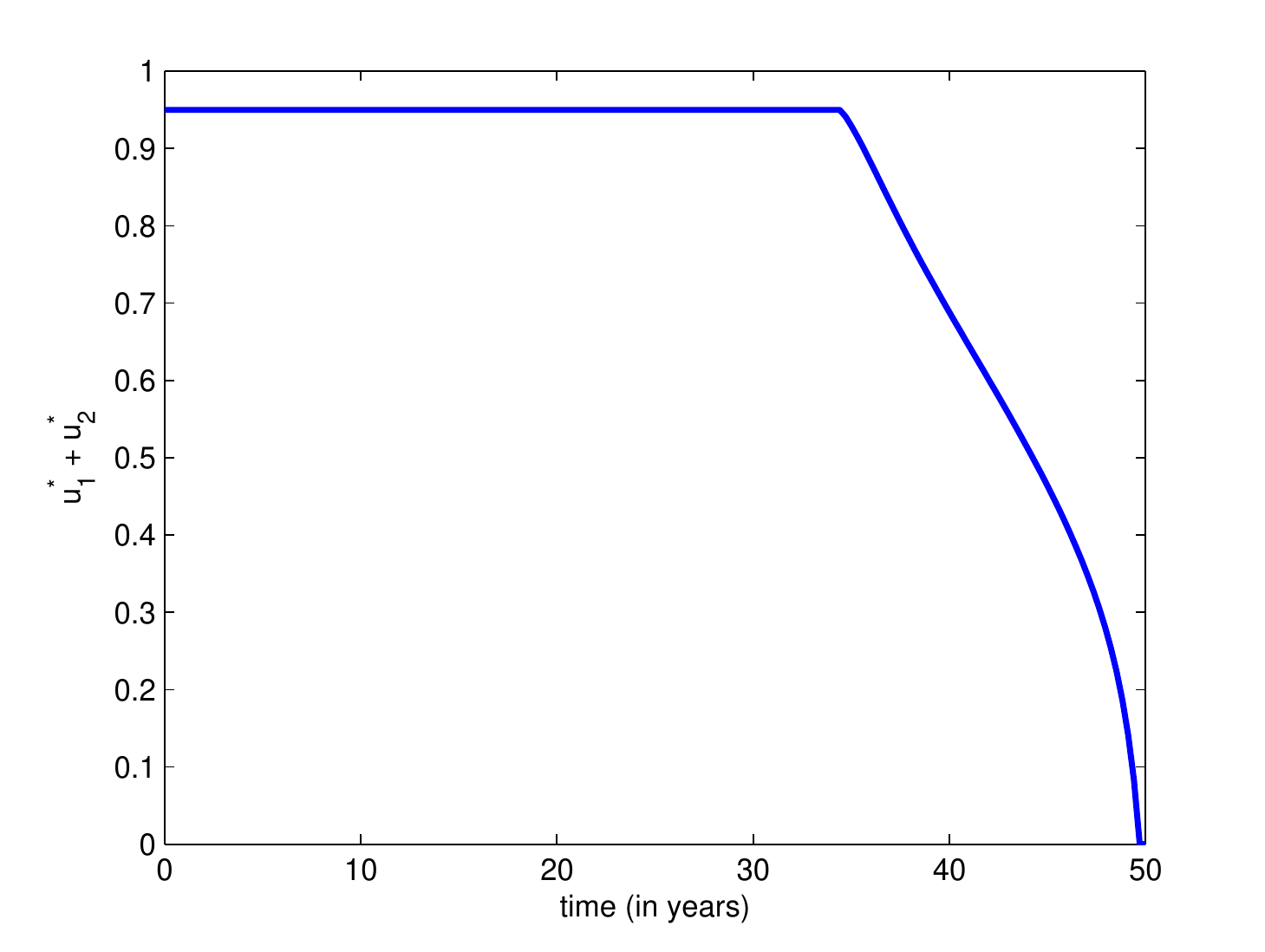}}
\caption{Extremal controls $u_1^*$ and $u_2^*$ for cost functional \eqref{costfunction},
$\beta_1 = 0.6$, $\beta_2 = 0.1$, $W_1 =500$, $W_2 = 50$ and parameter values
from Table~\ref{table:parameters:TB-HIV:Chronic}.}
\label{fig:u1:u2:minAT:W1:500}
\end{figure}
The extremal controls in Figure~\ref{fig:u1:u2:minAT:W1:500} (a) and (b) are not intuitive,
since the fraction of individuals that take both HIV and TB treatments is very low.
If we assume that our aim is to minimize the cost functional
\begin{equation}
J_1(u_1(\cdot), u_2(\cdot)) = \int_0^{T} \left[ A(t) + A_T(t)
+ \frac{W_1}{2}u_1^2(t) + \frac{W_2}{2}u_2^2(t) \right] dt,
\end{equation}
with $T = 10$ years and no disease induced deaths ($d_T = d_{TA} = d_A = 0$), that is,
we wish to minimize the number of individuals that have only AIDS $A$ and have both AIDS
and TB diseases $A_T$, the extremal controls behave in a more intuitive way. Since we assume
that there is no disease induced deaths we consider that is more adequate to consider $T = 10$
instead of $T = 50$ years. Moreover, is this case, the total population is constant.
We observe that the fraction of individuals that take both HIV and TB treatment $u_1^*$
takes the maximum value for more than 7 years, and during this time the extremal control
$u_2^*$ vanishes, see Figure~\ref{fig:u1:u2:minAeAT:nodeath}.
\begin{figure}[!htb]
\centering
\subfloat[\footnotesize{$u_1^*$}]{\label{u1:minAeAT:nodeath}
\includegraphics[width=0.33\textwidth]{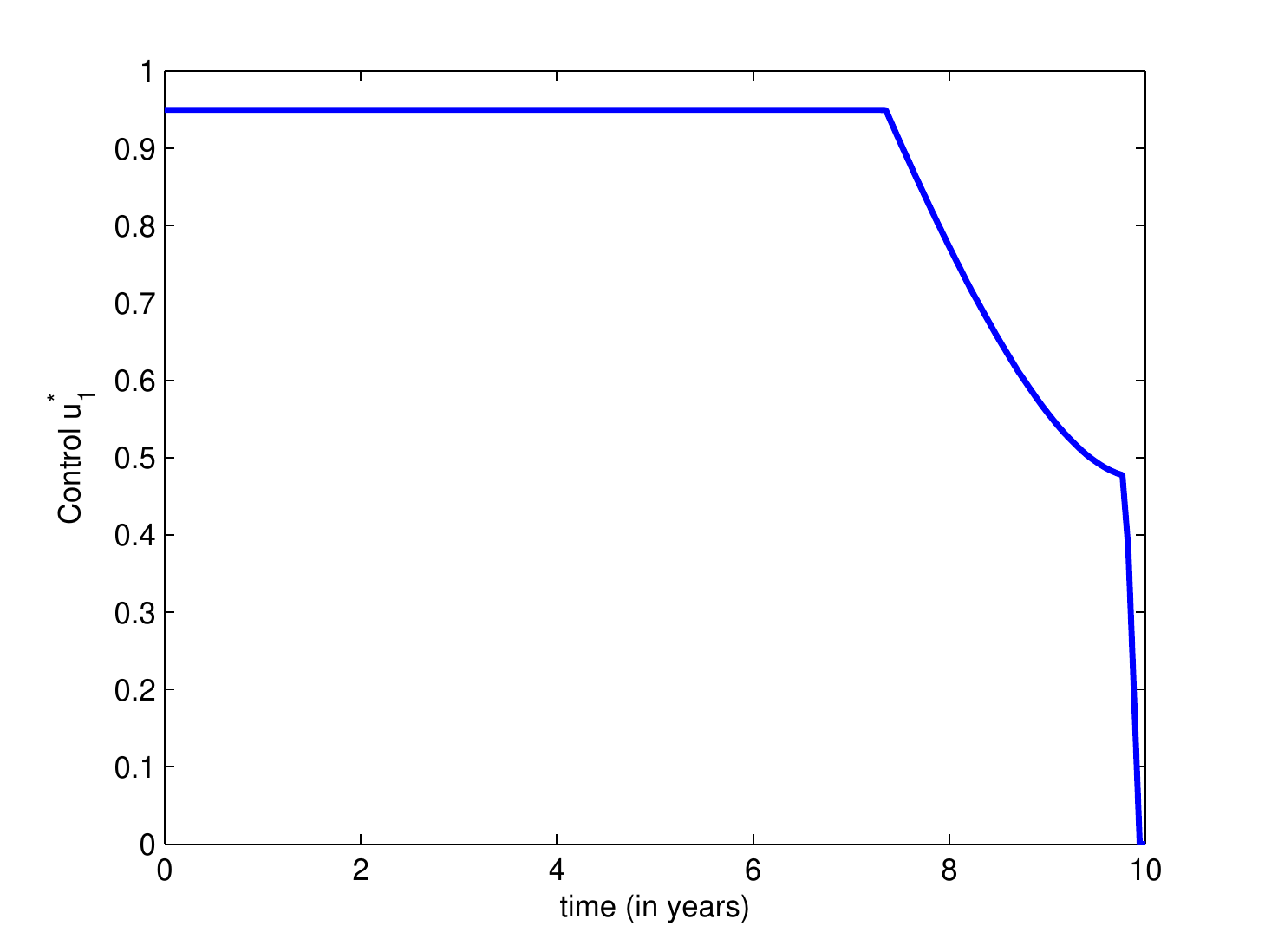}}
\subfloat[\footnotesize{$u_2^*$}]{\label{u2:minAeAT:nodeath}
\includegraphics[width=0.36\textwidth]{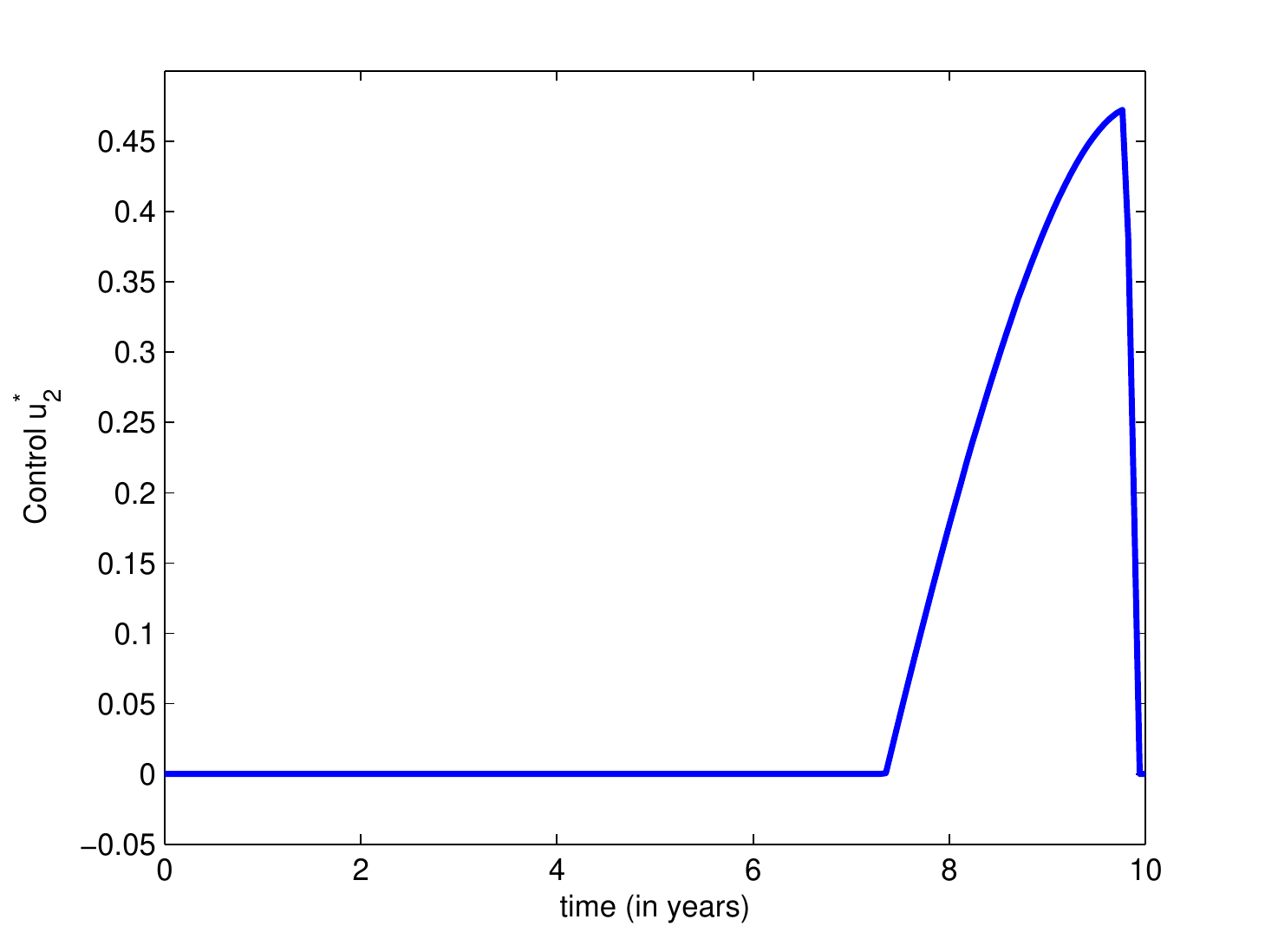}}
\subfloat[\footnotesize{$u_1^*+u_2^*$}]{\label{u1maisu2:minAeAT:nodeath}
\includegraphics[width=0.33\textwidth]{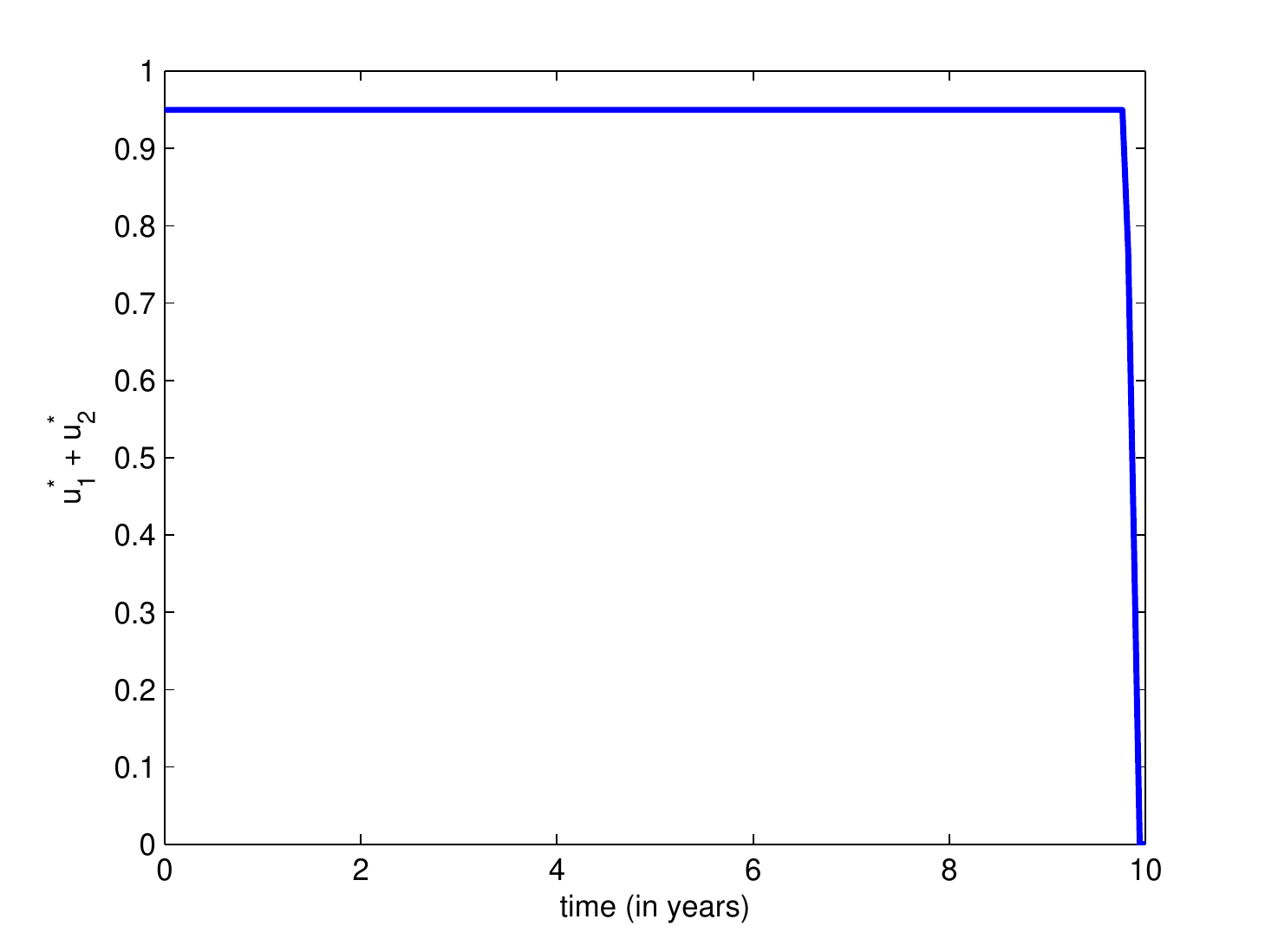}}
\caption{Extremal controls $u_1^*$ and $u_2^*$ for cost functional $J_1$,
with $\beta_1 = 0.6$, $\beta_2 = 0.1$, $W_1 = W_2 = 50$ and parameter values
from Table~\ref{table:parameters:TB-HIV:Chronic}.}
\label{fig:u1:u2:minAeAT:nodeath}
\end{figure}
In this case, we compare the behavior of the dynamics $A$, $A_T$, $I_{TH}$ and $C_H$
for the following cost functionals $J_1$, $J_2$ and $J_3$, with $W_1 = W_2 = 50$, where
\begin{equation}
J_2(u_1(\cdot)) = \int_0^{T} \left[ A(t) + A_T(t)
+ \frac{W_1}{2}u_1^2(t) \right] dt,
\end{equation}
\begin{equation}
J_3(u_2(\cdot)) = \int_0^{T} \left[ A(t) + A_T(t)
+ \frac{W_2}{2}u_2^2(t) \right] dt,
\end{equation}
that is, when both controls $u_1$ and $u_2$
are applied simultaneously, or are applied separately.
\begin{figure}[!htb]
\centering
\subfloat[\footnotesize{$u_1$}]{\label{onlyu1:minAeAT:nodeath}
\includegraphics[width=0.45\textwidth]{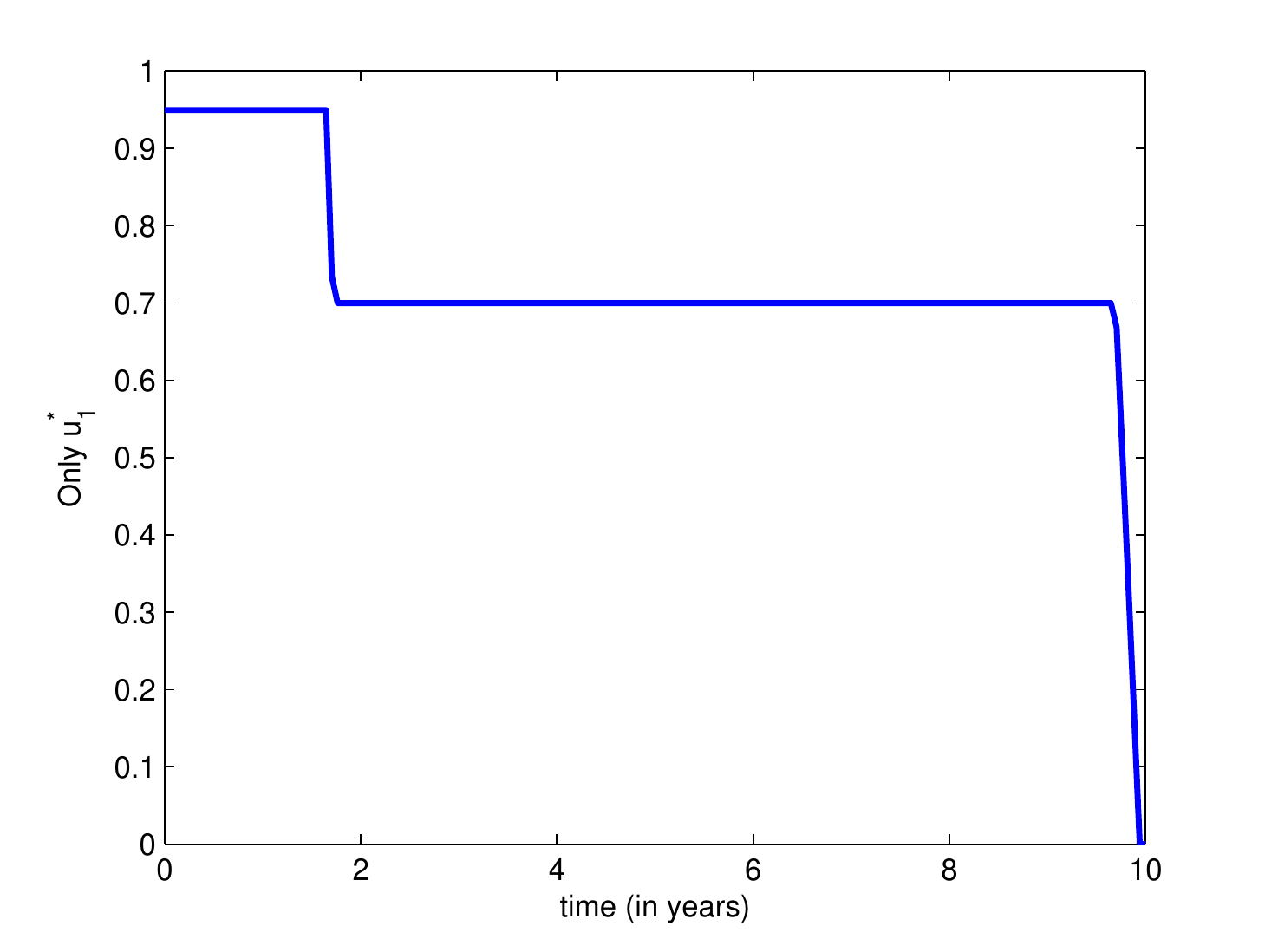}}
\subfloat[\footnotesize{$u_2$}]{\label{onlyu2:minAeAT:nodeath}
\includegraphics[width=0.45\textwidth]{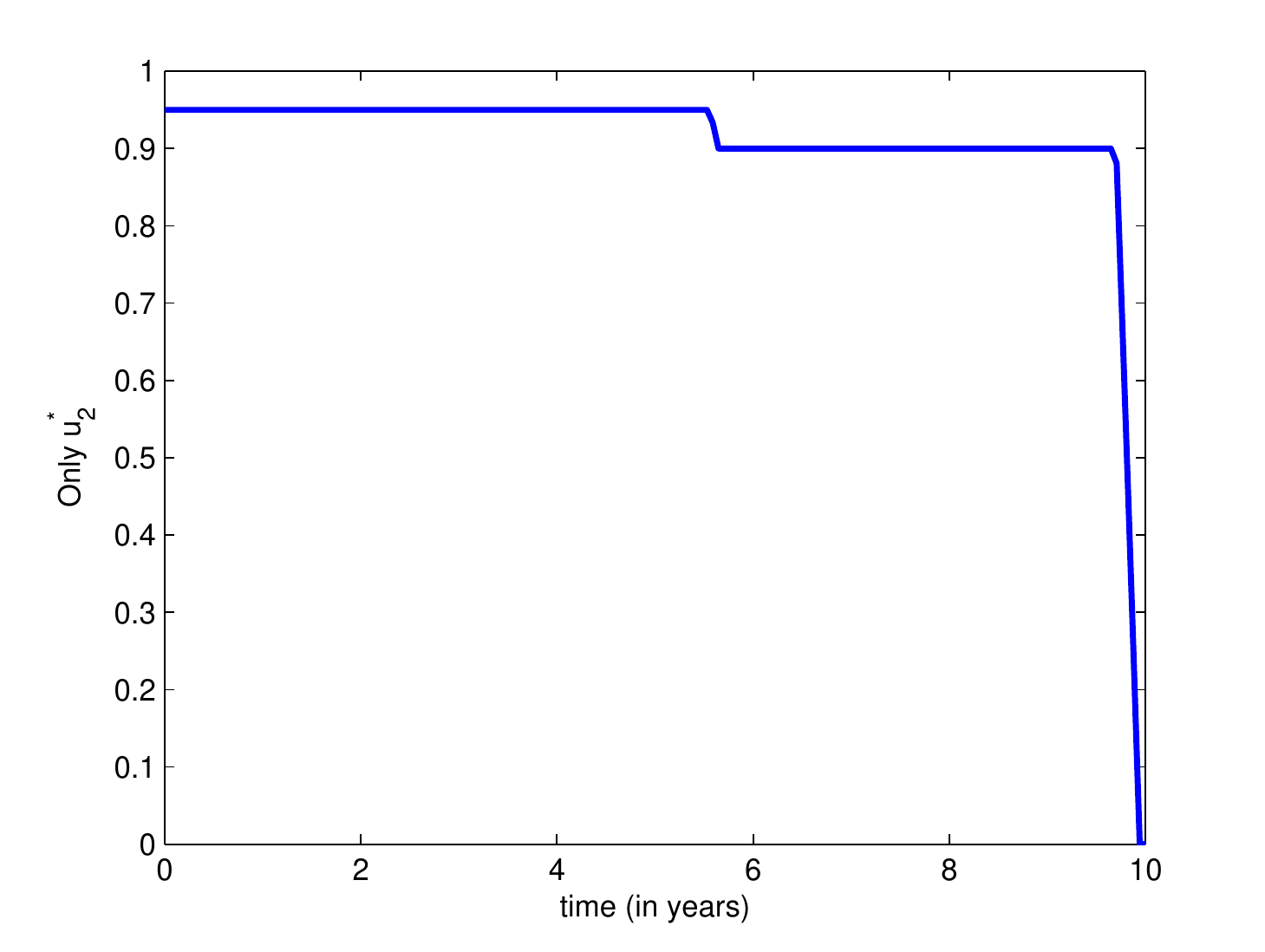}}
\caption{Extremal controls when $u_1$ and $u_2$ are applied separately.}
\label{fig:only:u1:u2:minAeAT:nodeath}
\end{figure}
The number of individuals with only AIDS, $A$, is lower for cost functional $J_1$
and with extremal controls given by $u_1^*$ and $u_2^*$ in
Figure~\ref{fig:u1:u2:minAeAT:nodeath} (a) and (b), see Figure~\ref{fig:A:AT:minAeAT:nodeath} (a).
However, this is not the best strategy for the reduction of the number of individuals
with both AIDS and TB diseases, $A_T$. In this case, the best strategy is to apply only control
$u_1$ where $u_1$ takes the values given in Figure~\ref{fig:only:u1:u2:minAeAT:nodeath},
see Figure~\ref{fig:A:AT:minAeAT:nodeath} (b). The best strategy for the reduction of the
total number of individuals with only AIDS and with both AIDS and TB, $A + A_T$,
during the first six years is the one associated to the cost functional $J_2$, that is,
apply only control $u_1^*$ (see Figure~\ref{fig:only:u1:u2:minAeAT:nodeath} (a)) and
after six years the best strategy is to apply simultaneously both controls $u_1^*$
and $u_2^*$ given by Figure~\ref{fig:u1:u2:minAeAT:nodeath}.
\begin{figure}[!htb]
\centering
\subfloat[\footnotesize{$A$}]{\label{A:minAeAT:nodeath}
\includegraphics[width=0.33\textwidth]{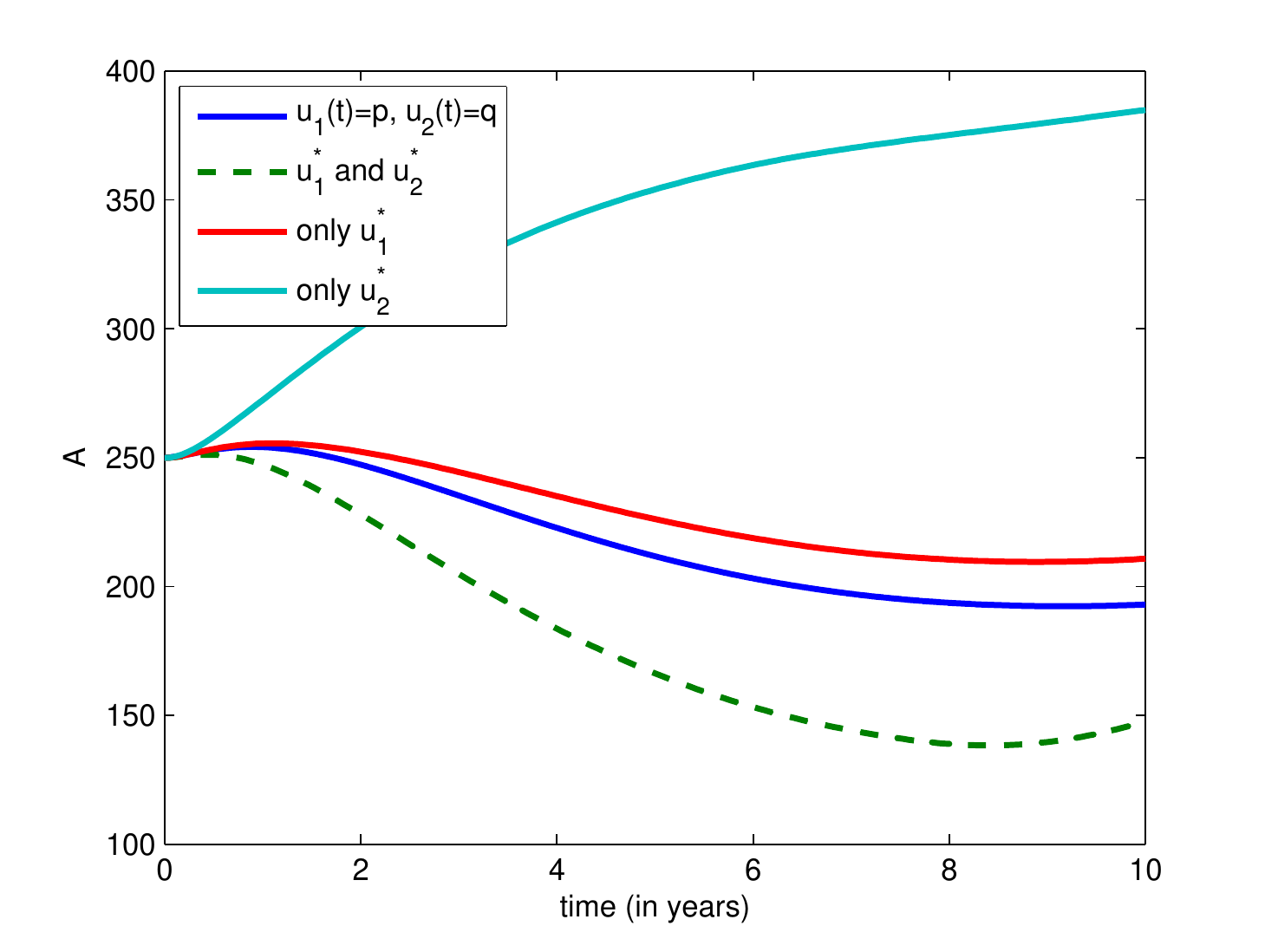}}
\subfloat[\footnotesize{$A_T$}]{\label{AT:minAeAT:nodeath}
\includegraphics[width=0.36\textwidth]{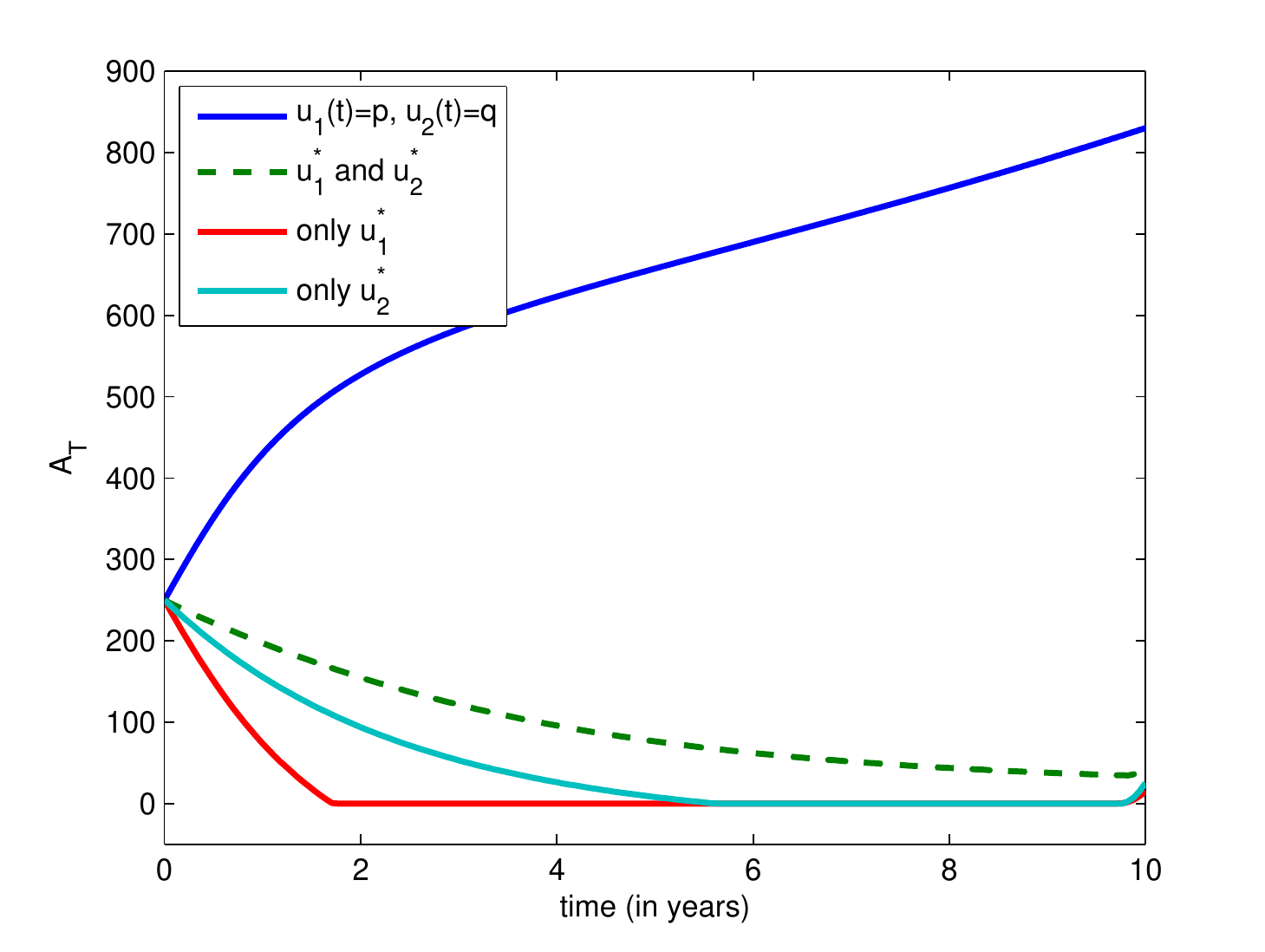}}
\subfloat[\footnotesize{$A + A_T$}]{\label{AmaisAT:minAeAT:nodeath}
\includegraphics[width=0.33\textwidth]{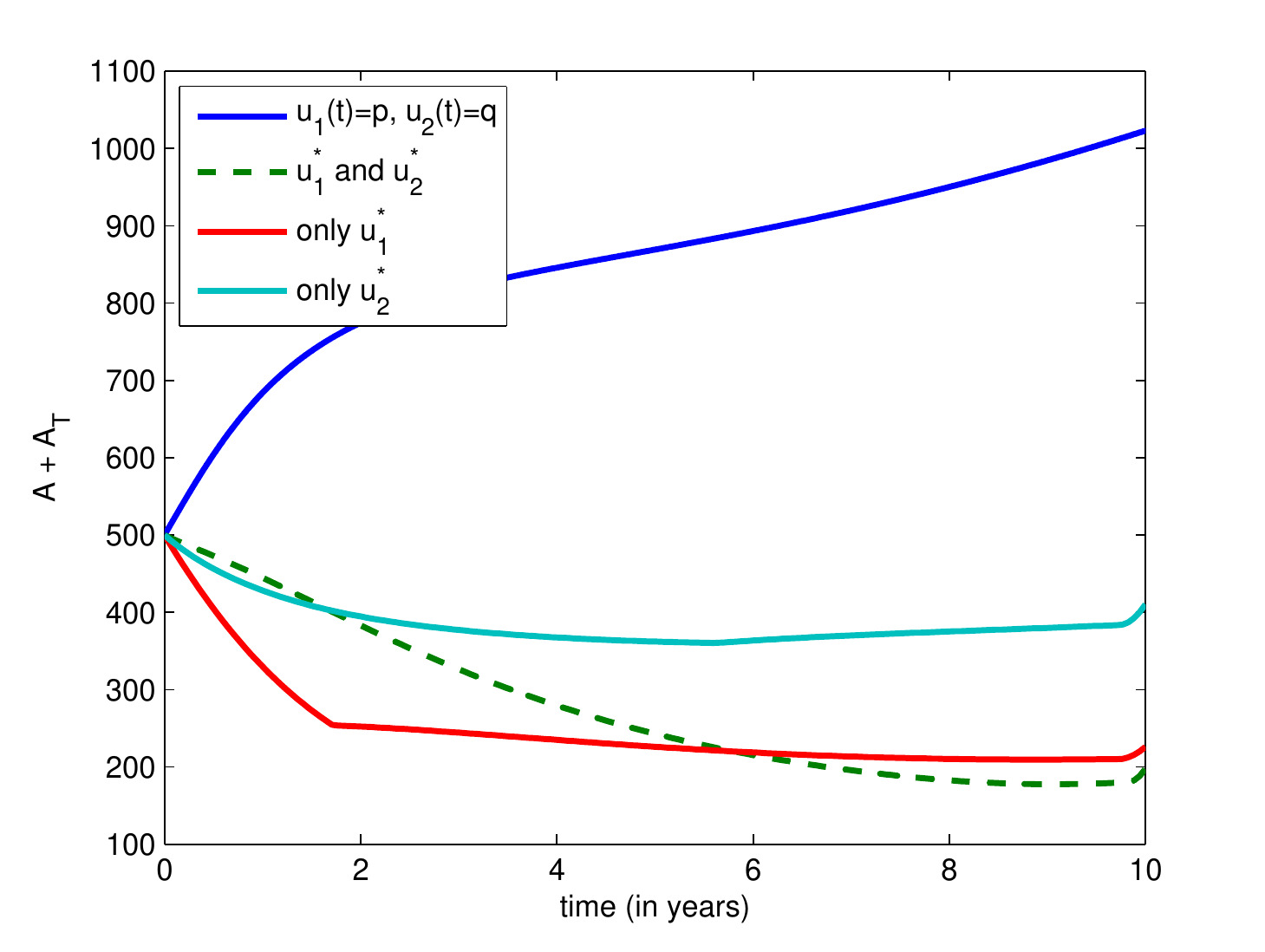}}
\caption{Dynamics $A$, $A_T$ and $A + A_T$ for cost functionals $J_1$,
$J_2$, $J_3$ and $u_1=p=0.1$, $u_2 = q = 0.3$, with $\beta_1 = 0.6$, $\beta_2 = 0.1$,
$W_1 = W_2 = 50$ and parameter values from Table~\ref{table:parameters:TB-HIV:Chronic}.}
\label{fig:A:AT:minAeAT:nodeath}
\end{figure}
In Figure~\ref{fig:ITH:CH:minAeAT:nodeath} we observe that the implementation of controls
$u_1$ and $u_2$ simultaneously or separately, contribute to the reduction of the number of
individuals infected with HIV and TB, $I_{TH}$, and increase the number of individuals
that remain in the class $C_H$, that is, the HIV infection does not evolve to AIDS disease.
In this case, the strategy of treating only TB for the individuals in the class $I_{TH}$
does not allow equal or better results on the reduction of individuals in the class $I_{TH}$,
which happens in the situation described in Figure~\ref{fig:u1:u2:minAT:W1:500}
for the case $W_1 = 500$ and $W_2 = 50$.
\begin{figure}[!htb]
\centering
\subfloat[\footnotesize{$I_{TH}$}]{\label{ITH:minAeAT:nodeath}
\includegraphics[width=0.45\textwidth]{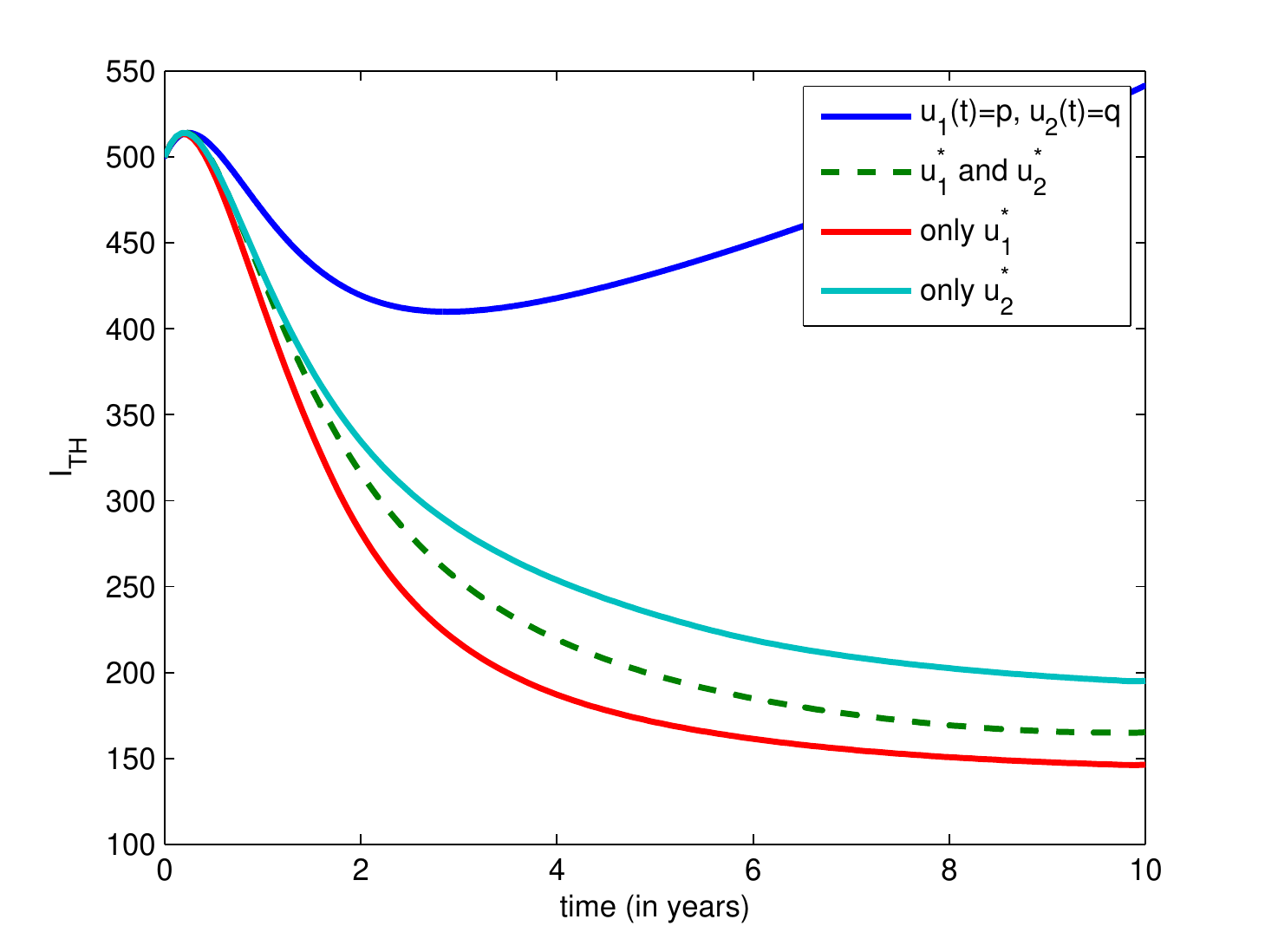}}
\subfloat[\footnotesize{$C_H$}]{\label{CH:minAeAT:nodeath}
\includegraphics[width=0.45\textwidth]{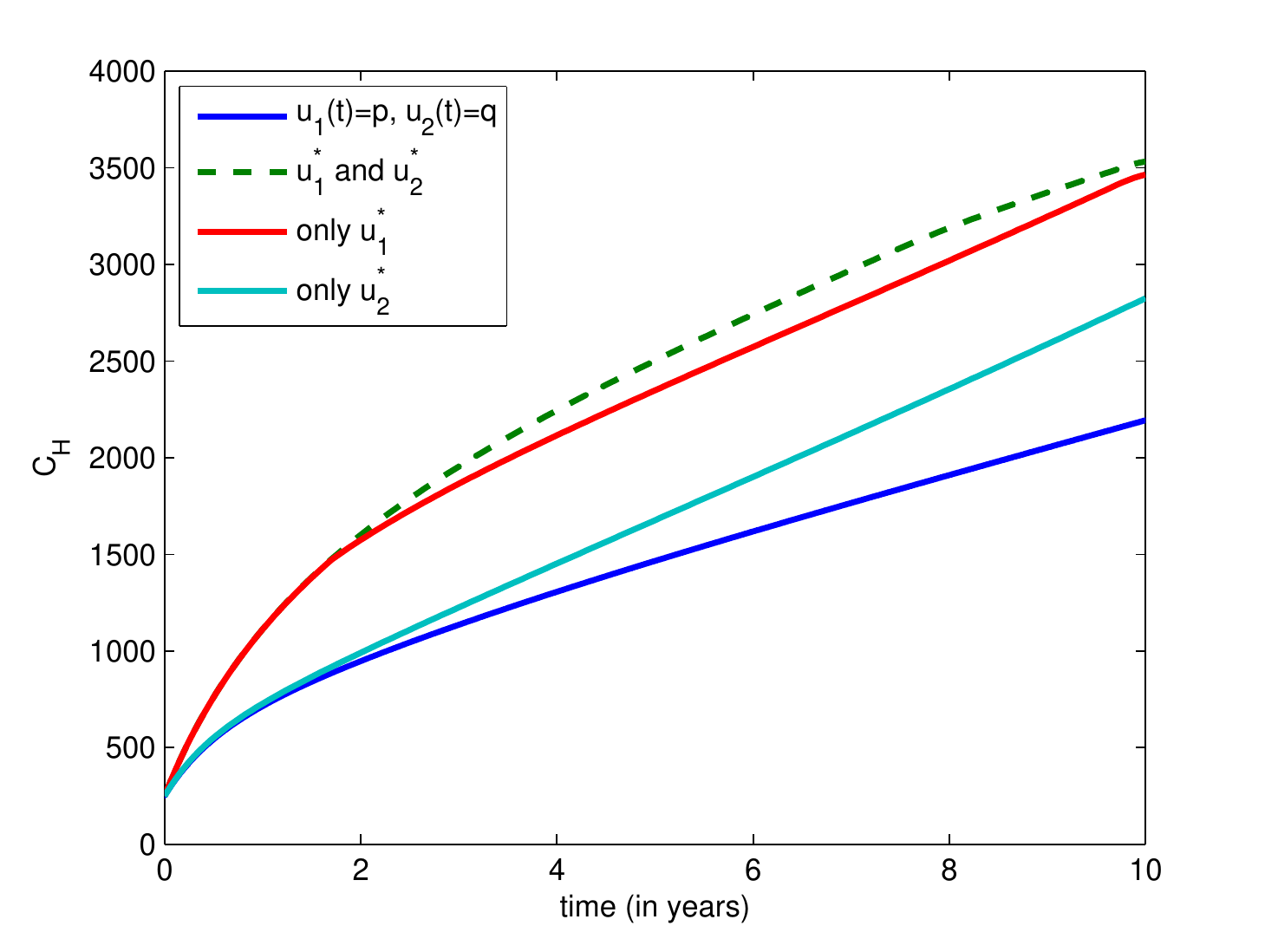}}
\caption{Dynamics $I_{TH}$ and $C_H$ for cost functionals $J_1$, $J_2$, $J_3$
and $u_1=p=0.1$, $u_2 = q = 0.3$,
with $\beta_1 = 0.6$, $\beta_2 = 0.1$, $W_1 = W_2 = 50$ and parameter
values from Table~\ref{table:parameters:TB-HIV:Chronic}.}
\label{fig:ITH:CH:minAeAT:nodeath}
\end{figure}
\begin{table}[!htb]
\centering
\begin{tabular}{l  p{6.5cm} l  l }
\hline \hline
{\small{Symbol}} &  {\small{Description}}   &  {\small{Value}} & {\small{References}}\\
\hline
{\small{$N$}} & {\small{Total population}} & {\small{variable}}  & \\
{\small{$N(0)$}} & {\small{Initial population}} & {\small{$30000$}}  & \\
{\small{$\Lambda$}} & {\small{Recruitment rate}} & {\small{$430$}}  & \\
{\small{$\mu$}} & {\small{Natural death rate}} &  {\small{$1/70 $}} &  \\
{\small{$\beta_1$}} & {\small{TB transmission rate}} &  {\small{variable}} &  \\
{\small{$\beta_2$}} & {\small{HIV transmission rate}} &  {\small{variable}} &  \\
{\small{$\eta_C$}} & {\small{Modification parameter}} &  {\small{$0.9$}} &  \\
{\small{$\eta_A$}} & {\small{Modification parameter}} &  {\small{$1.05$}} &  \\
{\small{$k_1$}} & {\small{Rate at which individuals leave $L_T$ class by becoming infectious}}
&  {\small{$1/2$}} & {\small{\cite{Castillo_Chavez_1997,SLenhart_2002}}}  \\
{\small{$\tau_1$}} & {\small{TB treatment rate for $L_T$ individuals}}
&  {\small{$2 $}} &  {\small{\cite{Castillo_Chavez_1997,SLenhart_2002}}} \\
{\small{$\tau_2$}} & {\small{TB treatment rate for $I_T$ individuals}}
&  {\small{$1 $}} & {\small{\cite{Castillo_Chavez_1997,SLenhart_2002}}}  \\
{\small{$\beta'_1$}} & {\small{Modification parameter}} &  {\small{$0.9$}} &  \\
{\small{$d_T$}} & {\small{TB induced death rate}} &  {\small{$1/10 $}}
& {\small{\cite{Castillo_Chavez_1997}}} \\
{\small{$\delta$}} & {\small{Modification parameter}} &  {\small{$1.03$}} &  \\
{\small{$\psi$}} & {\small{Modification parameter}} &  {\small{$1.07$}} &  \\
{\small{$\phi$}} & {\small{HIV treatment rate for $I_H$ individuals}} &  {\small{$1$}} &  \\
{\small{$\rho_1$}} & {\small{Rate at which individuals leave $I_H$ class to $A$}}
&  {\small{$0.1 $}} &  \\
{\small{$\alpha_1$}} & {\small{AIDS treatment rate}}
&  {\small{$0.33 $}} & {\small{\cite{Bhunu:BMB:2009:HIV:TB}}} \\
{\small{$\omega_1$}} & {\small{Rate at which individuals leave $C_H$ class}}
&  {\small{$0.09$}} &  \\
{\small{$d_A$}} & {\small{AIDS induced death rate}} &  {\small{$0.3 $}} &  \\
{\small{$\rho_2$}} & {\small{Rate at which individuals leave $I_{TH}$ class}}
&  {\small{$1 $}} &  \\
{\small{$p$}} & {\small{Fraction of $I_{TH}$ individuals that take HIV and TB treatment}}
&  {\small{$0.1$}} &  \\
{\small{$q$}} & {\small{Fraction of $I_{TH}$ individuals that take only TB treatment }}
&  {\small{$0.3$}} &  \\
{\small{$\tau_3$}} & {\small{Rate at which individuals leave $L_{TH}$ class}} &  {\small{$2$}} &  \\
{\small{$k_2$}} & {\small{Rate at which individuals leave $L_{TH}$ class by becoming TB infectious}}
&  {\small{$1.3 \, k_1$}} & \\
{\small{$r$}} & {\small{Fraction of $L_{TH}$ individuals that take HIV and TB treatment}} &  {\small{$0.3$}} &  \\
{\small{$\beta'_2$}} & {\small{Modification parameter}} &  {\small{$1.1$}} &  \\
{\small{$\omega_2$}} & {\small{Rate at which individuals leave $R_H$ class}} &  {\small{$0.15$}} &  \\
{\small{$\alpha_2$}} & {\small{HIV treatment rate for $A_T$ individuals}} &  {\small{$0.33 $}} &  \\
{\small{$d_{TA}$}} & {\small{AIDS-TB induced death rate }} &  {\small{$0.33 $}} &  \\
\hline \hline
\end{tabular}
\caption{Parameters of the TB-HIV/AIDS model.}
\label{table:parameters:TB-HIV:Chronic}
\end{table}


\section{Final comments and future work}
\label{sec:FC:fw}

Our numerical results only give extremals and no claims about optimality are made.
As future work, it would be interesting to verify optimality by using second order optimality
conditions and addressing properly the issue of conjugate points. For that, one
needs to extend the theory underlying the computation of conjugate points
and verification of optimality as developed in \cite[Section 5.3]{Book:S_L:12}
and \cite{cit:ref2}.


\section*{Acknowledgements}

This work was partially supported by Portuguese funds through CIDMA
(Center for Research and Development in Mathematics and Applications)
and FCT (The Portuguese Foundation for Science and Technology),
within project PEst-OE/MAT/UI4106/2014.
Silva was also supported by FCT through the post-doc fellowship
SFRH/BPD/72061/2010; Torres by EU funding under
the 7th Framework Programme FP7-PEOPLE-2010-ITN, grant agreement 264735-SADCO;
and by the FCT project OCHERA, PTDC/EEI-AUT/1450/2012, co-financed by
FEDER under POFC-QREN with COMPETE reference FCOMP-01-0124-FEDER-028894.
The authors would like to thank Professor Helmut Maurer from
Institute of Computational and Applied Mathematics, University of Muenster,
Germany, for kindly sharing with them his expertise and for several valuable
comments and helpful suggestions, which improved the quality of the paper;
and to two Referees for several constructive remarks and questions.



\bigskip


\end{document}